\documentclass[11pt]{article}

\usepackage[a4paper,margin=28mm]{geometry}

\usepackage[T1]{fontenc}
\usepackage[utf8]{inputenc}
\usepackage{lmodern}
\usepackage{amsmath,amssymb,amsthm,mathtools}
\usepackage{aliascnt}
\usepackage{mathrsfs}

\usepackage{enumitem}

\usepackage[colorlinks=true,linkcolor=blue,citecolor=blue,urlcolor=blue]{hyperref}
\usepackage[nameinlink,capitalize]{cleveref}

\theoremstyle{plain}
\newtheorem{theorem}{Theorem}[section]
\newaliascnt{lemma}{theorem}
\newtheorem{lemma}[lemma]{Lemma}
\aliascntresetthe{lemma}
\newaliascnt{proposition}{theorem}
\newtheorem{proposition}[proposition]{Proposition}
\aliascntresetthe{proposition}
\newaliascnt{corollary}{theorem}
\newtheorem{corollary}[corollary]{Corollary}
\aliascntresetthe{corollary}

\theoremstyle{definition}
\newaliascnt{definition}{theorem}
\newtheorem{definition}[definition]{Definition}
\aliascntresetthe{definition}
\newaliascnt{assumption}{theorem}

\aliascntresetthe{assumption}
\newaliascnt{example}{theorem}
\newtheorem{example}[example]{Example}
\aliascntresetthe{example}

\theoremstyle{remark}
\newaliascnt{remark}{theorem}
\newtheorem{remark}[remark]{Remark}
\aliascntresetthe{remark}

\newcommand{\R}{\mathbb{R}}

\newcommand{\Hr}{\mathcal{H}_{\mathrm r}}
\newcommand{\Hu}{\mathcal{H}_{\mathrm u}}
\newcommand{\Hfull}{\mathcal{H}}
\newcommand{\Lop}{\mathcal{L}}

\newcommand{\norm}[1]{\left\lVert #1 \right\rVert}
\newcommand{\inner}[2]{\left( #1,#2 \right)}
\newcommand{\diff}{\mathop{}\!\mathrm{d}}
\newcommand{\Id}{\mathrm{I}}
\newcommand{\e}{\mathrm{e}}

\title{Stable Reduction of Unresolved Dynamics: An Operator-Theoretic Framework}

\author{
  Hiroki Ishizaka\\
  Team FEM, Matsuyama, Japan\\
  E-mail: \texttt{h.ishizaka005@gmail.com}
}

\date{}

\begin{document}

\maketitle

\begin{abstract}
Dimension reduction removes variables, but a reliable reduction should not erase their dynamical influence. We study this principle for linear block evolution systems on resolved and unresolved Hilbert spaces. Exact elimination of the unresolved component yields a Volterra equation with a memory kernel and an effective forcing term carrying hidden initial data and forcing; in the Laplace domain the same operation is a dynamic Schur complement. We introduce finite-horizon trajectory-stable reduction and derive perturbation estimates for operator-valued kernels, then lift them to perturbations of the hidden propagator, couplings, hidden initial state, and hidden forcing. For passive skew-adjoint couplings, the reduced memory operator has positive type and an exact storage identity, and norm-convergent passive realisations preserve both trajectories and the storage--dissipation functional. Compatible finite spectral truncations provide structure-preserving internal-variable reductions. Two elementary examples separate positive type from instantaneous $L^2$-coercivity and show that small hidden amplitude need not imply small long-time influence. The framework isolates the controlled approximation of unresolved influence, rather than elimination alone, as the central reduction problem.
\end{abstract}

\noindent\textbf{Keywords.}
unresolved dynamics; model reduction; memory effects; Volterra equations; passivity; structure preservation; semigroup theory

\medskip
\noindent\textbf{Mathematics Subject Classification 2020.}
93B11; 93B28; 45D05; 45M10; 47D06; 34G10

\section{Introduction}

Many mathematical models contain more variables, scales, or degrees of freedom than one wishes to retain in an effective description. Model reduction therefore seeks a smaller system which reproduces selected aspects of the original dynamics at lower computational or conceptual cost; see, for example, \cite{BennerGugercinWillcox2015,GivonKupfermanStuart2004,PavliotisStuart2008}. The central difficulty, however, is not merely to remove variables. Variables which are not resolved may still feed back into those which are retained.

This observation is classical in nonequilibrium statistical mechanics. Projection-operator approaches going back to Zwanzig \cite{Zwanzig1961} and Mori \cite{Mori1965} show that eliminating degrees of freedom naturally produces memory and fluctuating or effective forcing terms. The same principle is central to the Mori--Zwanzig representation and to optimal prediction for underresolved systems \cite{ChorinKastKupferman1999,ChorinHaldKupferman2000,ChorinHaldKupferman2002}. Rigorous questions concerning the orthogonal dynamics were studied, for instance, by Givon, Hald, and Kupferman \cite{GivonHaldKupferman2005}. From a different but complementary viewpoint, abstract Volterra equations provide a natural analytic framework for evolution equations with memory; see \cite{GripenbergLondenStaffans1990,Pruss1993}.

The point of the present note is deliberately simple:
\begin{quote}
\emph{Dimension reduction removes variables. Stable reduction removes variables without erasing their dynamical influence.}
\end{quote}
The statement is not meant as a slogan replacing existing reduction theories. Rather, it identifies the property that will be used here to organise them. In particular, the issue is not whether an unresolved component is small in norm at one instant, but whether its influence on the resolved dynamics is small in the sense relevant to the evolution problem.

We begin with a linear block evolution system
\begin{align*}
  \Hfull=\Hr\oplus\Hu,
\end{align*}
where $\Hr$ contains the resolved variables and $\Hu$ the unresolved variables. Exact elimination of the unresolved component produces a Volterra equation on $\Hr$. Two objects remain after the unresolved variable itself has disappeared: a memory kernel and an effective forcing term. The latter contains, in particular, the unresolved initial state. This elementary identity already captures the mechanism that motivates the terminology \emph{stable reduction}.

The second purpose of the note is to separate this structural observation from the approximation problem. Once the exact reduced equation is written in memory form, one may approximate its unresolved influence or replace the memory representation by internal variables. Such operations are useful only if the resolved dynamics remains stable with respect to the reduction. We first prove a finite-horizon perturbation estimate in the natural $L^1$-topology for operator-valued kernels and then lift this estimate back to perturbations of the hidden propagator, couplings, hidden initial state, and hidden forcing. In the passive class we impose an additional storage--dissipation requirement, thereby separating trajectory stability from structure preservation.

The present framework also connects with the author's recent treatment of coercive evolution equations with measure-valued delays \cite{Ishizaka2026Memory}, where stability is studied for prescribed delay measures. The current note moves in the opposite direction: it asks how temporal nonlocality can arise from eliminating unresolved dynamics.

The main points of the note are as follows.
\begin{enumerate}[label=\textup{(\roman*)},leftmargin=2.6em]
  \item A block evolution system admits an exact elimination identity in which the unresolved dynamics survives as a memory kernel and an unresolved-state forcing term.
  \item The same elimination is identified in the Laplace domain as a dynamic Schur complement. Finite-dimensional hidden dynamics yield rational memory transfer functions; non-diagonalisable and oscillatory hidden modes are distinguished from pure sums of exponentials.
  \item A finite-horizon notion of trajectory-stable reduction is introduced for memory-bearing reduced equations, together with a kernel-stability estimate for perturbations integrable in operator norm.
  \item Stability is lifted to the hidden realisation: perturbations of the unresolved propagator, couplings, initial state, and hidden forcing are propagated quantitatively to the resolved trajectory.
  \item For a passive skew-adjoint coupling, the left-hand memory operator has positive type and carries an exact internal-storage identity. Norm-convergent passive realisations preserve this storage--dissipation structure, while positive type alone is shown not to imply instantaneous $L^2$-coercivity.
  \item Finite spectral truncations provide a concrete class of structure-preserving internal-variable reductions.
  \item A simple example shows that small unresolved amplitude does not, by itself, imply a uniformly small dynamical influence on long time intervals.
\end{enumerate}

The framework is intentionally restricted. We do not claim that every useful reduced model should be non-Markovian, nor that the Mori--Zwanzig formalism is the only route to memory. The aim is to isolate a minimal operator-theoretic mechanism and the stability questions attached to it.

\noindent\emph{Organisation of the paper.}
\Cref{sec:block} derives the exact memory equation obtained by eliminating the unresolved component. \Cref{sec:structure} records the corresponding dynamic Schur complement and isolates a passive class with a memory-storage identity. \Cref{sec:stable} introduces finite-horizon trajectory-stable reduction and proves the basic stability estimates. \Cref{sec:realisation} lifts these estimates to perturbations of the hidden realisation and gives structure-preserving passive reductions, including a finite-mode truncation. \Cref{sec:examples} gives elementary examples, while \cref{sec:meaning,sec:scope} state the rôle and scope of the note.

\section{Resolved and unresolved dynamics}\label{sec:block}
Throughout the note, $\Hr$ and $\Hu$ are separable real Hilbert spaces. Whenever Laplace transforms or complex spectra are used, we pass to the canonical complexifications of the spaces and operators, without changing notation. This convention keeps the energy identities below in real inner-product form while allowing the usual complex resolvent and spectral calculations.

We set
\begin{align*}
  \Hfull:=\Hr\oplus\Hu,
  \quad
  P_{\mathrm r}:\Hfull\to\Hr,
  \quad
  P_{\mathrm r}(x,y):=x.
\end{align*}
We write $x(t)\in\Hr$ for the resolved state and $y(t)\in\Hu$ for the unresolved state, so that $P_{\mathrm r}(x(t),y(t))=x(t)$. We consider the block evolution system
\begin{subequations}\label{eq:block-system}
\begin{align}
  \frac{\mathrm{d}}{\mathrm{d}t}x(t) &= A x(t)+B y(t)+f(t), \label{eq:block-x}\\
  \frac{\mathrm{d}}{\mathrm{d}t}y(t) &= C x(t)+D y(t)+g(t), \label{eq:block-y}
\end{align}
\end{subequations}
with initial values
\begin{align}\label{eq:block-initial}
  x(0)=x_0,\quad y(0)=y_0.
\end{align}
Here,
\begin{align*}
  A:D(A)\subset\Hr\to\Hr,
  \quad
  D:D(D)\subset\Hu\to\Hu
\end{align*}
are possibly unbounded operators, while
\begin{align*}
  B\in\Lop(\Hu,\Hr),
  \quad
  C\in\Lop(\Hr,\Hu)
\end{align*}
are bounded coupling operators. For the exact classical derivation below, $f$ and $g$ are assumed to have whatever regularity is needed for the stated classical solution; the stability theory later requires only $L^1$-in-time forcing. We assume that $A$ and $D$ generate strongly continuous semigroups $S_A(t)$ and $S_D(t)$, respectively. By the bounded perturbation theorem, the corresponding block operator
\begin{align*}
  \mathcal A
  :=
  \begin{pmatrix}
    A & B\\
    C & D
  \end{pmatrix},
  \quad
  D(\mathcal A)=D(A)\oplus D(D),
\end{align*}
generates a strongly continuous semigroup on $\Hfull$; see, for example, \cite[Chap.~3]{Pazy1983}. We shall use only the simpler elimination identity below.

For an operator-valued kernel $K:[0,\infty)\to\Lop(\Hr)$, write
\begin{align*}
  (K*x)(t):=\int_0^t K(t-s)x(s)\,\diff s
\end{align*}
whenever the integral is well defined.

\begin{proposition}[Exact elimination of the unresolved component]\label{prop:exact-elimination}
Assume that $(x,y)$ is a classical solution of \cref{eq:block-system,eq:block-initial}. Then, the resolved component satisfies
\begin{align}\label{eq:exact-reduced}
  \frac{\mathrm{d}}{\mathrm{d}t}x(t)
  =A x(t)+(K*x)(t)+F_{\mathrm u}(t)+f(t),
  \quad x(0)=x_0,
\end{align}
where
\begin{align}
  K(t)&:=B S_D(t)C, \label{eq:kernel}\\
  F_{\mathrm u}(t)
  &:=B S_D(t)y_0
    +B\int_0^t S_D(t-s)g(s)\,\diff s. \label{eq:unresolved-forcing}
\end{align}
Conversely, if $x$ solves \cref{eq:exact-reduced} and
\begin{align}\label{eq:reconstruct-y}
  y(t)
  :=S_D(t)y_0
    +\int_0^t S_D(t-s)\{Cx(s)+g(s)\}\,\diff s,
\end{align}
with sufficient regularity for \cref{eq:block-system} to hold classically, then $(x,y)$ solves the original block system.
\end{proposition}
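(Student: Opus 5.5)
The plan is to treat the $y$-equation \cref{eq:block-y} as an inhomogeneous Cauchy problem for the generator $D$, with forcing $h(t):=Cx(t)+g(t)$. The variation-of-constants formula then expresses $y$ through $x$, and substituting into \cref{eq:block-x} gives the memory equation.

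For the forward direction, fix $t>0$ and consider $\phi(s):=S_D(t-s)y(s)$ for $s\in[0,t]$. Since $(x,y)$ is classical, $y\in C^1([0,t];\Hu)$ with $y(s)\in D(D)$. The standard product-rule argument for strongly continuous semigroups gives
\begin{align*}
  \phi'(s)=-S_D(t-s)Dy(s)+S_D(t-s)y'(s)=S_D(t-s)\{Cx(s)+g(s)\}.
\end{align*}
This uses that $S_D$ commutes with $D$ on $D(D)$, together with a difference-quotient argument that exploits the local uniform boundedness of $S_D$ and the continuity of $y'$. Integrating over $[0,t]$ yields exactly \cref{eq:reconstruct-y}.

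Next I apply the bounded operator $B$, which commutes with the Bochner integrals. Using linearity, $By(t)=(K*x)(t)+F_{\mathrm u}(t)$ with $K(t)=BS_D(t)C$ as in \cref{eq:kernel} and $F_{\mathrm u}$ as in \cref{eq:unresolved-forcing}. Well-definedness of the convolution follows from strong continuity of $s\mapsto S_D(t-s)Cx(s)$ and the bound $\norm{S_D(\tau)}\le Me^{\omega\tau}$. Substituting this into \cref{eq:block-x} gives \cref{eq:exact-reduced}, with $x(0)=x_0$ inherited from the block initial data.

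For the converse, define $y$ by \cref{eq:reconstruct-y}. By hypothesis, this $y$ is regular enough to be a classical solution of $y'=Dy+Cx+g$, $y(0)=y_0$; this is where the regularity assumption enters. It is the standard fact that the mild solution is classical when, for example, $h=Cx+g\in C^1$ or takes values in $D(D)$ continuously. Reading the forward computation backwards, $By(t)=(K*x)(t)+F_{\mathrm u}(t)$, so \cref{eq:exact-reduced} becomes \cref{eq:block-x}, and the initial conditions are immediate.

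The only delicate step is justifying the differentiation of $\phi$, i.e. the product rule for $S_D(t-s)y(s)$. I would handle it by splitting the difference quotient as
\begin{align*}
  \frac{S_D(t-s-h)y(s+h)-S_D(t-s)y(s)}{h}
  ={}&S_D(t-s-h)\frac{y(s+h)-y(s)}{h}\\
  &+\frac{S_D(t-s-h)-S_D(t-s)}{h}\,y(s),
\end{align*}
and using local uniform boundedness of $S_D$ on the first term and $y(s)\in D(D)$ on the second. Everything else is linear bookkeeping.
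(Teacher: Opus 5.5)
Your proposal is correct and follows the same route as the paper: apply variation of constants to the unresolved equation \cref{eq:block-y}, apply $B$, substitute into \cref{eq:block-x}, and obtain the converse by reversing the computation. The only difference is that you spell out the standard product-rule argument for $s\mapsto S_D(t-s)y(s)$, which justifies the variation-of-constants formula; the paper takes that step for granted.
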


\begin{proof}
Variation of constants in the unresolved equation \cref{eq:block-y} gives
\begin{align*}
  y(t)
  =S_D(t)y_0
   +\int_0^t S_D(t-s)Cx(s)\,\diff s
   +\int_0^t S_D(t-s)g(s)\,\diff s.
\end{align*}
Substitution into \cref{eq:block-x} yields
\begin{align*}
\begin{split}
  \frac{\mathrm{d}}{\mathrm{d}t}x(t)
  = A x(t)
      +B S_D(t)y_0
      +\int_0^t B S_D(t-s)C x(s)\,\diff s 
     +B\int_0^t S_D(t-s)g(s)\,\diff s
      +f(t),
\end{split}
\end{align*}
which is exactly \cref{eq:exact-reduced}--\cref{eq:unresolved-forcing}. The converse follows by reversing the same calculation.
\end{proof}

\begin{remark}[Memory is generated by elimination]\label{rem:memory-generated}
The kernel $K(t)=BS_D(t)C$ is not an additional constitutive assumption. It is the exact feedback path
\begin{align*}
  \Hr \xrightarrow{\ C\ } \Hu
  \xrightarrow{\ S_D(t)\ } \Hu
  \xrightarrow{\ B\ } \Hr
\end{align*}
through the unresolved dynamics. Thus, even when the original block system is local in time, its exact equation on the resolved space is generally nonlocal in time. This is the linear block-system analogue of the memory mechanism underlying projection-operator and Mori--Zwanzig formulations \cite{Mori1965,Zwanzig1961,ChorinHaldKupferman2000,ChorinHaldKupferman2002}.
\end{remark}

\begin{remark}[The unresolved initial state does not disappear]\label{rem:initial-unresolved}
The term $BS_D(t)y_0$ in \cref{eq:unresolved-forcing} records information carried by the unresolved initial state. Therefore, removing $y$ from the state vector does not justify removing $y_0$ from the dynamics. If $y_0$ is unknown, it must be estimated, modelled statistically, bounded, or otherwise represented. Simply setting it to zero is an additional modelling assumption. Likewise, writing a Volterra convolution with lower limit zero and no initial-memory term selects a particular initialisation of the memory. It should not be identified with zero physical prehistory unless a realisation of the memory state has been specified.
\end{remark}

\begin{remark}[Mild formulation]\label{rem:mild-formulation}
The classical regularity in \cref{prop:exact-elimination} is used only to keep the derivation transparent. Under the standard semigroup assumptions, the same identity can be written in mild form. This is the natural setting for unbounded differential operators and is consistent with the usual semigroup treatment of abstract evolution equations \cite{Pazy1983} and with the Volterra framework in \cite{GripenbergLondenStaffans1990,Pruss1993}.
\end{remark}

\section{Structural information carried by exact elimination}\label{sec:structure}
The time-domain identity in \cref{prop:exact-elimination} has two useful complementary interpretations. The first is a frequency-domain Schur complement. The second is an energy interpretation available under additional symmetry and passivity assumptions. Neither interpretation should be imposed on a general reduction without checking its hypotheses.

\subsection{Dynamic Schur complement}

For a function of exponential order, write its Laplace transform as
\begin{align*}
  \widehat v(\zeta):=\int_0^\infty \e^{-\zeta t}v(t)\,\diff t
\end{align*}
for $\operatorname{Re}\zeta$ sufficiently large.

\begin{proposition}[Laplace-domain elimination]\label{prop:dynamic-schur}
Assume that $(x,y)$ is a classical solution of \cref{eq:block-system} of exponential order and that $Ax,Dy,f,g$ are also of exponential order, so that the Laplace transform may be applied term by term in the complexified spaces. Then,
\begin{align}\label{eq:dynamic-schur}
  \left[
    \zeta\Id-A-B(\zeta\Id-D)^{-1}C
  \right]\widehat x(\zeta)
  =x_0+\widehat f(\zeta)
   +B(\zeta\Id-D)^{-1}\{y_0+\widehat g(\zeta)\}.
\end{align}
Furthermore,
\begin{align}\label{eq:laplace-kernel}
  \widehat K(\zeta)
  =B(\zeta\Id-D)^{-1}C.
\end{align}
Thus, the memory transfer operator is the frequency-dependent contribution produced by eliminating the unresolved block.
\end{proposition}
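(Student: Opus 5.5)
The plan is to Laplace transform both equations of \cref{eq:block-system} term by term, eliminate $\widehat y$ algebraically, and then separately identify $\widehat K$ directly from \cref{eq:kernel}.

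\emph{Step 1: transform the system.} Since $x$ is classical and $x,\dot x$ are of exponential order, integration by parts gives $\widehat{\dot x}(\zeta)=\zeta\widehat x(\zeta)-x_0$ for $\operatorname{Re}\zeta$ large. The same holds for $y$. Closedness of $A$ and $D$ lets the transform commute with them: $\widehat{Ax}=A\widehat x$ with $\widehat x(\zeta)\in D(A)$, because the Bochner integral of a $D(A)$-valued function whose image under $A$ is also integrable lies in $D(A)$. Boundedness of $B$ and $C$ lets them pass through the integral. This yields
\begin{align*}
(\zeta\Id-A)\widehat x=x_0+B\widehat y+\widehat f,\quad
(\zeta\Id-D)\widehat y=y_0+C\widehat x+\widehat g .
\end{align*}

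\emph{Step 2: eliminate $\widehat y$.} For $\operatorname{Re}\zeta$ larger than the growth bound of $S_D$, the Hille--Yosida theory gives $\zeta\in\rho(D)$. Moreover,
\begin{align*}
(\zeta\Id-D)^{-1}=\int_0^\infty \e^{-\zeta t}S_D(t)\,\diff t .
\end{align*}
Solving the second equation for $\widehat y=(\zeta\Id-D)^{-1}\{y_0+C\widehat x+\widehat g\}$ and substituting into the first gives \cref{eq:dynamic-schur} after moving $B(\zeta\Id-D)^{-1}C\widehat x$ to the left.

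\emph{Step 3: identify the kernel.} Since $\norm{K(t)}\le\norm{B}\norm{C}M\e^{\omega t}$, the transform $\widehat K$ exists for $\operatorname{Re}\zeta>\omega$. Boundedness of $B$ and $C$ gives $\widehat K(\zeta)=B\bigl(\int_0^\infty \e^{-\zeta t}S_D(t)\diff t\bigr)C$, and the resolvent integral formula then yields \cref{eq:laplace-kernel}. As a consistency check, the convolution theorem applied to \cref{eq:exact-reduced} reproduces \cref{eq:dynamic-schur}, using $\widehat{K*x}=\widehat K\widehat x$ and $\widehat{F_{\mathrm u}}=B(\zeta\Id-D)^{-1}\{y_0+\widehat g\}$.

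\emph{Main obstacle.} The only delicate point is the justification in Step 1 that $A$ commutes with the Laplace integral, along with ensuring that a common half-plane of convergence exists. I would handle the first by the closed-operator/Bochner integral lemma, using the assumption that $Ax$ is of exponential order. For the second, I would take $\operatorname{Re}\zeta$ exceeding the maximum of all the exponential orders and of the growth bound of $S_D$.
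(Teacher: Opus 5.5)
Your proposal is correct and follows the paper's proof: Laplace-transform the system, solve the unresolved equation for $\widehat y$ in the resolvent half-plane of $D$, substitute into the resolved equation, and identify $\widehat K$ through $\int_0^\infty \e^{-\zeta t}S_D(t)\,\diff t=(\zeta\Id-D)^{-1}$. Your additional justifications fill in details the paper leaves implicit: closedness of $A$ to commute it with the Laplace integral, integration by parts for $\widehat{\dot x}$, and a common half-plane of convergence. One small correction: the constants $M,\omega$ in your Step 3 should be the growth constants of $S_D$, not those of \cref{eq:semigroup-bound}, which the paper reserves for $S_A$.
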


\begin{proof}
Taking Laplace transforms in \cref{eq:block-y} gives
\begin{align*}
  (\zeta\Id-D)\widehat y(\zeta)
  =y_0+C\widehat x(\zeta)+\widehat g(\zeta).
\end{align*}
For $\operatorname{Re}\zeta$ in the resolvent half-plane of $D$, solve for $\widehat y$ and substitute the result into the transformed resolved equation. This yields \cref{eq:dynamic-schur}. The semigroup resolvent formula
\begin{align*}
  \int_0^\infty \e^{-\zeta t}S_D(t)\,\diff t
  =(\zeta\Id-D)^{-1}
\end{align*}
then gives \cref{eq:laplace-kernel}; see, for example, \cite{Pazy1983,Staffans2005}.
\end{proof}

\begin{remark}[Why ``dynamic Schur complement'' is useful]\label{rem:dynamic-schur}
For a static block matrix, elimination produces an ordinary Schur complement. In \cref{eq:dynamic-schur}, the unresolved block contributes the resolvent $(\zeta\Id-D)^{-1}$, hence a frequency-dependent Schur complement. Its inverse Laplace transform is precisely the memory kernel. This makes explicit that time nonlocality and frequency dependence are two representations of the same eliminated dynamics.
\end{remark}

\subsection{Finite-dimensional hidden modes}

\begin{proposition}[Finite-dimensional realisation]\label{prop:finite-modes}
Suppose that $\Hu$ is finite-dimensional and that the complexification of $D$ is diagonalisable. After complexification, we write
\begin{align*}
  D=\sum_{j=1}^{m}\lambda_j P_j,
\end{align*}
where $P_j$ are the spectral projections. Then,
\begin{align}\label{eq:finite-mode-kernel}
  K(t)=\sum_{j=1}^{m}\e^{\lambda_j t}BP_jC.
\end{align}
In particular, exponentially decaying hidden modes produce a finite sum of decaying exponential memory terms.
\end{proposition}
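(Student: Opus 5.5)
The plan is to compute the semigroup $S_D(t)$ explicitly and then substitute it into the kernel formula \cref{eq:kernel} from \cref{prop:exact-elimination}.

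Since $\Hu$ is finite-dimensional, $D$ is a bounded operator. Hence $S_D(t)=\e^{tD}$, given by the norm-convergent exponential series. After complexification, diagonalisability provides spectral projections $P_j$ with the standard properties
\begin{align*}
  P_jP_k=\delta_{jk}P_j,
  \quad
  \sum_{j=1}^m P_j=\Id .
\end{align*}
Here the $\lambda_j$ are taken distinct, with each $P_j$ the projection onto the corresponding eigenspace along the sum of the others.

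From these relations, induction gives $D^n=\sum_j\lambda_j^nP_j$ for every $n\ge 0$; the case $n=0$ is the resolution of the identity. Summing the exponential series termwise, which is allowed because the sum over $j$ is finite, yields
\begin{align*}
  \e^{tD}=\sum_{j=1}^m \e^{\lambda_j t}P_j .
\end{align*}
As an alternative check, one can verify that the right-hand side equals $\Id$ at $t=0$ and satisfies $\frac{\mathrm d}{\mathrm dt}(\cdot)=D(\cdot)$. Uniqueness of solutions to linear ODEs in finite dimensions then identifies it with $S_D(t)$.

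Substituting into $K(t)=BS_D(t)C$ and using linearity of composition with the bounded operators $B$ and $C$ gives \cref{eq:finite-mode-kernel}. One point needs a remark: $K$ was defined on the real spaces, while the formula involves complex $\lambda_j$ and $P_j$. Since the complexification of $BS_D(t)C$ restricts to the real operator, the identity holds for the complexified kernel, and its restriction to $\Hr$ is real. Equivalently, non-real eigenvalues occur in conjugate pairs with conjugate projections, so the sum is real.

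For the final sentence of the statement, suppose $\operatorname{Re}\lambda_j<0$. Then
\begin{align*}
  \norm{\e^{\lambda_j t}BP_jC}\le \e^{\operatorname{Re}\lambda_j t}\norm{BP_jC},
\end{align*}
so each term decays exponentially. There is no real obstacle here. The only care needed is the bookkeeping about complexification and realness, together with the remark that diagonalisability is exactly what rules out the polynomial factors $t^k\e^{\lambda t}$ that Jordan blocks would produce.
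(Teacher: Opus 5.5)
Your proposal is correct and follows the same route as the paper: obtain $S_D(t)=\e^{tD}=\sum_j\e^{\lambda_jt}P_j$ from the spectral decomposition and substitute it into $K(t)=BS_D(t)C$. Your additional details (the power-series or ODE-uniqueness justification, and the realness remark via conjugate eigenvalue pairs) are sound and make explicit what the paper leaves implicit.
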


\begin{proof}
The spectral decomposition gives
$
  S_D(t)=\e^{tD}=\sum_j\e^{\lambda_jt}P_j
$.
Substitution into \cref{eq:kernel} gives \cref{eq:finite-mode-kernel}.
\end{proof}

\begin{remark}[Finite-dimensional does not mean ``pure Prony'' in general]\label{rem:jordan-modes}
The diagonalisability hypothesis in \cref{prop:finite-modes} is essential for the pure exponential form. A Jordan block produces terms of the form $t^q\e^{\lambda t}$, while non-real eigenvalues produce oscillatory real combinations. Equivalently, the transfer operator in \cref{eq:laplace-kernel} is rational in finite dimensions, but its inverse Laplace transform need not be a positive sum of real decaying exponentials. Thus a Prony representation is a structural subclass, not a consequence of finite-dimensional elimination alone; compare the realisation viewpoint in \cite{Staffans2005}.
\end{remark}

\subsection{Passive coupling, memory storage, and positive type}

The general right-hand memory kernel $BS_D(t)C$ in \cref{eq:exact-reduced} need not be positive or completely monotone. For the passive sign convention, the off-diagonal part of the block generator is skew-adjoint: one has $B=-C^*$ and $D=-L$ with $L=L^*\ge0$. The exact right-hand kernel is then $-C^*\e^{-tL}C$. After moving this feedback term to the left-hand side, the associated memory operator is the positive kernel defined below. We say that a left-hand kernel $K$ is \emph{of positive type on $[0,\mathfrak T]$} if
\begin{align*}
  \int_0^{\mathfrak T}\inner{(K*x)(t)}{x(t)}\,\diff t\ge0
  \quad
  \text{for any }x\in L^2(0,\mathfrak T;\Hr).
\end{align*}

\begin{theorem}[Passive elimination and memory storage]\label{thm:passive-memory}
Let $L\in\Lop(\Hu)$ be self-adjoint and non-negative, and let
$C\in\Lop(\Hr,\Hu)$. We define
\begin{align}\label{eq:passive-kernel}
  K_L(t):=C^*\e^{-tL}C,
  \quad t\ge0.
\end{align}
For $x\in L^2(0,\mathfrak T;\Hr)$, we set
\begin{align}\label{eq:memory-state-q}
  q(t):=\int_0^t\e^{-(t-s)L}Cx(s)\,\diff s.
\end{align}
Then, $q\in H^1(0,\mathfrak T;\Hu)$,
\begin{align*}
  \frac{\mathrm{d}}{\mathrm{d}t}q +Lq=Cx,
  \quad q(0)=0,
\end{align*}
and
\begin{align}\label{eq:positive-type-identity}
\begin{split}
  \int_0^{\mathfrak T}
  \inner{(K_L*x)(t)}{x(t)}\,\diff t
  = \frac12\norm{q(\mathfrak T)}_{\Hu}^2
  +\int_0^{\mathfrak T} \norm{L^{1/2}q(t)}_{\Hu}^2\,\diff t
  \ge0.
\end{split}
\end{align}
Therefore, $K_L$ is of positive type on any finite time interval.
\end{theorem}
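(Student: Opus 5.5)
Since $L\in\Lop(\Hu)$ is bounded, self-adjoint and non-negative, $\e^{-tL}$ is a uniformly continuous contraction semigroup. The function $q$ in \cref{eq:memory-state-q} is therefore the mild solution of $q'+Lq=Cx$, $q(0)=0$, with forcing $Cx\in L^2(0,\mathfrak T;\Hu)$. The plan has three steps: establish the regularity of $q$ and the ODE, express the memory term through $q$, and then use an energy identity.

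\emph{Regularity.} Write $q(t)=\e^{-tL}\int_0^t \e^{sL}Cx(s)\,\diff s$. This factorisation is legitimate because $L$ is bounded, so $\e^{sL}$ is bounded on $[0,\mathfrak T]$. The integral is absolutely continuous with $L^2$ derivative $\e^{tL}Cx(t)$, and $t\mapsto\e^{-tL}$ is $C^1$ in operator norm. The product rule then gives $q\in H^1(0,\mathfrak T;\Hu)$ and $q'=-Lq+Cx$ almost everywhere, with $q(0)=0$. This avoids any density argument.

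\emph{Rewriting the memory term.} Since $C^*$ is bounded it commutes with the Bochner integral, so
\begin{align*}
  (K_L*x)(t)=C^*\int_0^t\e^{-(t-s)L}Cx(s)\,\diff s=C^*q(t).
\end{align*}
Hence
\begin{align*}
  \inner{(K_L*x)(t)}{x(t)}=\inner{q(t)}{Cx(t)}=\inner{q(t)}{q'(t)+Lq(t)}.
\end{align*}
For $q\in H^1$, the function $\norm{q}^2$ is absolutely continuous with derivative $2\inner{q'}{q}$. Also $\inner{Lq}{q}=\norm{L^{1/2}q}^2$, where $L^{1/2}$ is the non-negative square root. Integrating over $[0,\mathfrak T]$ and using $q(0)=0$ gives \cref{eq:positive-type-identity}. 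Both terms on the right are non-negative, which yields positive type on every finite interval.

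\emph{Main obstacle.} There is no deep difficulty. The only point needing care is the regularity: justifying $q\in H^1$ and the chain rule for $\norm{q}^2$ when $x$ is merely $L^2$. Boundedness of $L$ reduces this to elementary vector-valued calculus. If unbounded $L$ were wanted, one would instead approximate $x$ by smooth functions and pass to the limit, using continuity of both sides in $L^2$. That extension is not needed here.
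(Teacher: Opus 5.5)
Your proposal is correct and follows the paper's proof. Both arguments obtain $q'+Lq=Cx$ from the boundedness of $L$, rewrite $(K_L*x)(t)=C^*q(t)$, and integrate $\inner{q}{q'+Lq}$ using $q(0)=0$; your factorisation $q(t)=\e^{-tL}\int_0^t\e^{sL}Cx(s)\,\diff s$ just makes explicit the differentiation step the paper states in one line.
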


\begin{proof}
Because $L$ is bounded, differentiation of \cref{eq:memory-state-q} in $L^2$ gives
\begin{align*}
  \frac{\mathrm{d}}{\mathrm{d}t}q+Lq=Cx.
\end{align*}
Furthermore,
\begin{align*}
  (K_L*x)(t)=C^*q(t).
\end{align*}
Therefore,
\begin{align*}
  \int_0^{\mathfrak T}\inner{(K_L*x)(t)}{x(t)}\,\diff t
  &=\int_0^{\mathfrak T}\inner{q(t)}{Cx(t)}\,\diff t
  =\int_0^{\mathfrak T}\inner{q(t)}{ \frac{\mathrm{d}}{\mathrm{d}t}q(t) +Lq(t)} \,\diff t,
  \end{align*}
which is exactly \cref{eq:positive-type-identity}.
\end{proof}

\begin{corollary}[Energy identity after passive elimination]\label{cor:passive-energy}
Assume, in addition, that $A_0\in\Lop(\Hr)$ is self-adjoint and non-negative. We consider
\begin{subequations}\label{eq:passive-extended-system}
\begin{align}
  \frac{\mathrm{d}}{\mathrm{d}t}x +A_0x+C^*y&=f,\label{eq:passive-x}\\
  \frac{\mathrm{d}}{\mathrm{d}t}y+Ly-Cx&=0,\label{eq:passive-y}
\end{align}
\end{subequations}
with $y(0)=0$. Eliminating $y$ gives
\begin{align}\label{eq:passive-reduced}
  \frac{\mathrm{d}}{\mathrm{d}t}x+A_0x+(K_L*x)=f.
\end{align}
For any classical solution,
\begin{align}\label{eq:passive-energy}
  \frac12\frac{\mathrm{d}}{\mathrm{d}t}
  \left(
    \norm{x(t)}_{\Hr}^2+\norm{y(t)}_{\Hu}^2
  \right)
  +\inner{A_0x(t)}{x(t)}
  +\norm{L^{1/2}y(t)}_{\Hu}^2
  =\inner{f(t)}{x(t)}.
\end{align}
Thus, the hidden state may be removed from the list of resolved variables without removing its stored energy: in the reduced formulation, storage is encoded by the memory state $q=y$. Such history-space energies are classical in the analysis of Volterra and viscoelastic equations; see, for example, \cite{Dafermos1970,Pruss1993}.
\end{corollary}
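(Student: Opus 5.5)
The corollary has two claims: the elimination identity \cref{eq:passive-reduced} and the energy identity \cref{eq:passive-energy}. I will prove them separately, using \cref{prop:exact-elimination} for the first and a direct inner-product computation for the second.

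For the elimination, I match \cref{eq:passive-extended-system} with the block system \cref{eq:block-system} by taking
\[
A=-A_0,\qquad B=-C^*,\qquad D=-L,\qquad g=0,\qquad y_0=0.
\]
Since $L$ is bounded, self-adjoint and non-negative, $D$ generates the uniformly continuous contraction semigroup $S_D(t)=\e^{-tL}$. By \cref{prop:exact-elimination}, the memory kernel \cref{eq:kernel} becomes
\[
K(t)=BS_D(t)C=-C^*\e^{-tL}C=-K_L(t).
\]
The effective forcing \cref{eq:unresolved-forcing} vanishes identically, because $y_0=0$ and $g=0$. Substituting into \cref{eq:exact-reduced} and moving the convolution term to the left-hand side gives \cref{eq:passive-reduced}.

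For the bookkeeping link with the reduced formulation, I use the variation-of-constants formula \cref{eq:reconstruct-y}. With $y_0=0$ and $g=0$ it reads
\[
y(t)=\int_0^t\e^{-(t-s)L}Cx(s)\,\diff s,
\]
which is exactly the memory state $q$ of \cref{eq:memory-state-q}. This identifies $q=y$, as claimed in the statement.

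For the energy identity, I take the $\Hr$-inner product of \cref{eq:passive-x} with $x(t)$ and the $\Hu$-inner product of \cref{eq:passive-y} with $y(t)$, then add. Classical regularity lets me write $\inner{\dot x}{x}=\frac12\frac{\mathrm d}{\mathrm dt}\norm{x}^2$, and likewise for $y$. The coupling terms cancel, since
\[
\inner{C^*y}{x}-\inner{Cx}{y}=0
\]
by the definition of the adjoint in the real Hilbert spaces. Finally, $\inner{Ly}{y}=\norm{L^{1/2}y}^2$ by the functional calculus for the non-negative self-adjoint $L$. These three facts give \cref{eq:passive-energy} directly.

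As a consistency check, integrating \cref{eq:passive-energy} over $[0,\mathfrak T]$ should match the storage identity \cref{eq:positive-type-identity} of \cref{thm:passive-memory} with $q=y$. This confirms that the stored energy $\frac12\norm{y}^2$ is exactly the memory storage term.

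No step is a genuine obstacle. The only care needed is the sign convention: $B=-C^*$ makes the right-hand kernel negative, and it becomes the positive kernel $K_L$ only after being moved to the left-hand side. A second point is that $y(0)=0$ is what removes the hidden-initial-state forcing from \cref{eq:passive-reduced}, consistent with \cref{rem:initial-unresolved}.
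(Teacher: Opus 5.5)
Your proposal is correct and is essentially the paper's proof: the paper obtains \cref{eq:passive-reduced} by variation of constants in \cref{eq:passive-y}, which identifies $y=q$, and then substitutes into \cref{eq:passive-x}. This is exactly what your appeal to \cref{prop:exact-elimination} with $A=-A_0$, $B=-C^*$, $D=-L$, $g=0$, $y_0=0$ amounts to, and the energy identity then follows, as you do, by testing \cref{eq:passive-x} with $x$ and \cref{eq:passive-y} with $y$, adding, and cancelling the coupling terms via $\inner{C^*y}{x}=\inner{y}{Cx}$.
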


\begin{proof}
By variation of constants in \cref{eq:passive-y}, $y=q$ from \cref{eq:memory-state-q}; substitution into \cref{eq:passive-x} gives \cref{eq:passive-reduced}. Testing \cref{eq:passive-x} by $x$ and \cref{eq:passive-y} by $y$, and then adding the two equalities, cancels the coupling terms because
$
  \inner{C^*y}{x}=\inner{y}{Cx}
$.
This gives \cref{eq:passive-energy}.
\end{proof}

\begin{corollary}[Complete monotonicity of the passive memory kernel]\label{cor:complete-monotonicity}
Under the assumptions of \cref{thm:passive-memory}, for any $v\in\Hr$ the scalar function
\begin{align*}
  k_v(t):=\inner{K_L(t)v}{v}
\end{align*}
is completely monotone on $(0,\infty)$. More precisely, if $E_L$ is the spectral resolution of $L$, then
\begin{align}\label{eq:bernstein-spectral}
  k_v(t)
  =\int_{[0,\norm{L}_{\Lop(\Hu)}]}
    \e^{-\lambda t}\,\diff\nu_v(\lambda),
  \quad
  \nu_v(E):=\norm{E_L(E)Cv}_{\Hu}^2.
\end{align}
\end{corollary}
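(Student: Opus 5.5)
The plan is to use the spectral theorem for the bounded self-adjoint non-negative operator $L$ and then apply Bernstein's characterisation in its easy direction: a Laplace transform of a finite positive measure is completely monotone.

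First, since $L\in\Lop(\Hu)$ is self-adjoint with $0\le L\le\norm{L}_{\Lop(\Hu)}$, its spectrum lies in $[0,\norm{L}_{\Lop(\Hu)}]$. It therefore has a projection-valued spectral resolution $E_L$ supported there. The functional calculus gives $\e^{-tL}=\int_{[0,\norm{L}]}\e^{-\lambda t}\,\diff E_L(\lambda)$ for every $t\ge0$, and the integrand is a bounded continuous function on the spectrum. For $v\in\Hr$ set $w:=Cv\in\Hu$. Then
\begin{align*}
  k_v(t)=\inner{C^*\e^{-tL}Cv}{v}=\inner{\e^{-tL}w}{w}
  =\int_{[0,\norm{L}]}\e^{-\lambda t}\,\diff\inner{E_L(\lambda)w}{w}.
\end{align*}
Because $E_L(E)$ is an orthogonal projection, $\inner{E_L(E)w}{w}=\norm{E_L(E)w}_{\Hu}^2$. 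So the scalar spectral measure is exactly $\nu_v$. It is a finite positive Borel measure with total mass $\norm{Cv}_{\Hu}^2$, and this proves \cref{eq:bernstein-spectral}. In the complexified setting one uses the complexified $L$; this is harmless because $L$ is real, so $\nu_v$ is real and non-negative.

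Second, complete monotonicity follows by differentiating under the integral sign. For $t>0$ and every $n$, $\partial_t^n\e^{-\lambda t}=(-\lambda)^n\e^{-\lambda t}$. On the compact support $[0,\norm{L}]$ this is bounded uniformly by $\norm{L}^n$, and $\nu_v$ is finite, so dominated convergence justifies the interchange. This gives
\begin{align*}
  (-1)^n k_v^{(n)}(t)=\int_{[0,\norm{L}]}\lambda^n\e^{-\lambda t}\,\diff\nu_v(\lambda)\ge0 .
\end{align*}
Hence $k_v\in C^\infty(0,\infty)$ and is completely monotone. In fact the argument shows smoothness up to $t=0$, because the support is bounded.

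No step is a real obstacle. The only point needing care is the identification of the scalar spectral measure $\diff\inner{E_L w}{w}$ with $\nu_v$, together with checking that it is positive and finite. The justification of differentiation under the integral is routine, since $L$ is bounded. If $L$ were unbounded, the same argument would go through on $(0,\infty)$, with the support $[0,\infty)$ and dominating functions $\lambda^n\e^{-\lambda t_0}$ for $t\ge t_0>0$.
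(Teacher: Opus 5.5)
Your proof is correct and follows the same route as the paper: you apply the spectral theorem to $\inner{\e^{-tL}Cv}{Cv}$ to identify the finite non-negative measure $\nu_v$, and complete monotonicity then follows at once. The only difference is that the paper cites Bernstein's theorem, whereas you check its easy direction directly by differentiating under the integral sign; that direction is all that is actually needed here.
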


\begin{proof}
The spectral theorem gives
\begin{align*}
  \inner{\e^{-tL}Cv}{Cv}
  =\int_{[0,\norm{L}_{\Lop(\Hu)}]}\e^{-\lambda t}\,\diff\nu_v(\lambda).
\end{align*}
The measure $\nu_v$ is finite and non-negative. Bernstein's theorem then gives complete monotonicity; see \cite{SchillingSongVondracek2012}.
\end{proof}

\begin{remark}[Complete monotonicity is not automatic]\label{rem:cm-not-automatic}
\Cref{cor:complete-monotonicity} uses both the self-adjoint non-negative hidden operator and the adjoint coupling $C^*,C$. For the general kernel $K(t)=BS_D(t)C$ of \cref{eq:kernel}, these assumptions need not hold. The kernel may change sign, be non-self-adjoint, or contain damped oscillations. Thus complete monotonicity is a property to be derived from the hidden dynamics, not an axiom produced by elimination itself.
\end{remark}

\begin{proposition}[Positive type does not imply instantaneous coercivity]\label{prop:no-coercivity}
Let $\Hr\ne\{0\}$, $\gamma>0$, and
\begin{align*}
  K_\gamma(t):=\gamma\e^{-\gamma t}\Id.
\end{align*}
Then,
\begin{align*}
  \int_0^{\mathfrak T}\inner{(K_\gamma*x)(t)}{x(t)}\,\diff t\ge0
\end{align*}
for any $x\in L^2(0,\mathfrak T;\Hr)$, but there is no constant $c_{\mathfrak T}>0$ such that
\begin{align}\label{eq:no-instantaneous-coercivity}
  \int_0^{\mathfrak T}\inner{(K_\gamma*x)(t)}{x(t)}\,\diff t
  \ge c_{\mathfrak T}\norm{x}_{L^2(0,\mathfrak T;\Hr)}^2
\end{align}
for all $x$.
\end{proposition}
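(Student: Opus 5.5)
Positivity will come directly from \cref{thm:passive-memory}. Take $\Hu:=\Hr$, $L:=\gamma\Id$ (bounded, self-adjoint, non-negative) and $C:=\gamma^{1/2}\Id$. Then
\begin{align*}
  C^*\e^{-tL}C=\gamma\e^{-\gamma t}\Id=K_\gamma(t).
\end{align*}
The identity \cref{eq:positive-type-identity} then gives
\begin{align*}
  \int_0^{\mathfrak T}\inner{(K_\gamma*x)(t)}{x(t)}\,\diff t
  =\tfrac12\norm{q(\mathfrak T)}^2+\gamma\int_0^{\mathfrak T}\norm{q(t)}^2\,\diff t\ge0,
\end{align*}
where $q$ is the memory state of \cref{eq:memory-state-q}, here $q(t)=\gamma^{1/2}\int_0^t\e^{-\gamma(t-s)}x(s)\,\diff s$.

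To rule out coercivity, I will exhibit a sequence $x_n$ with $\norm{x_n}_{L^2}=1$ whose quadratic form tends to zero. Fix a unit vector $e\in\Hr$, which exists since $\Hr\neq\{0\}$, and set $x_n(t):=\phi_n(t)e$ with a rapidly oscillating scalar profile $\phi_n(t):=c_n\cos(nt)$, normalised in $L^2(0,\mathfrak T)$. The form becomes scalar, and it is bounded above by
\begin{align*}
  \norm{K_\gamma*x_n}_{L^2}\norm{x_n}_{L^2}=\norm{k*\phi_n}_{L^2}, \qquad k(t)=\gamma\e^{-\gamma t}.
\end{align*}
So it suffices to show $\norm{k*\phi_n}_{L^2(0,\mathfrak T)}\to0$.

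That limit is the one genuine step. I would compute $(k*\phi_n)(t)=c_n\gamma\int_0^t\e^{-\gamma(t-s)}\cos(ns)\,\diff s$ explicitly, or integrate by parts once, moving the derivative onto the smooth exponential. This gives $|(k*\phi_n)(t)|\le C(\gamma,\mathfrak T)\,c_n/n$ uniformly in $t$. Since $c_n$ stays bounded, with $c_n\to(2/\mathfrak T)^{1/2}$, the $L^2$ norm is $O(1/n)$.

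An alternative route avoids oscillation altogether: take $x_n$ concentrated near $t=0$, say $x_n=\sqrt n\,\mathbf 1_{[0,1/n]}e$. Then $|k*x_n|\le\gamma\sqrt n\cdot(1/n)$ on $[0,1/n]$, so the form is at most $\gamma/n\to0$. Either construction shows that no $c_{\mathfrak T}>0$ can satisfy \cref{eq:no-instantaneous-coercivity}. I would present the concentration example since it is shortest, keeping the oscillatory one as the frequency-domain interpretation: $\widehat K_\gamma(\zeta)=\gamma/(\zeta+\gamma)$ decays at high frequency.
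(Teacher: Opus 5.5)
Your proposal is correct. The positivity step is exactly the paper's: the same choice $\Hu=\Hr$, $L=\gamma\Id$, $C=\gamma^{1/2}\Id$ in \cref{thm:passive-memory}. Your oscillatory fallback is essentially the paper's proof. There $x_n(t)=\sin(nt)v$, and the scalar memory $z_n(t)=\int_0^t\e^{-\gamma(t-s)}\sin(ns)\,\diff s$ is computed explicitly. The bound $\norm{z_n}_{L^\infty(0,\mathfrak T)}=O(n^{-1})$ is then inserted into the storage identity, which makes the form $O(n^{-2})$. Your Cauchy--Schwarz bound gives only $O(n^{-1})$, which is equally sufficient.

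The concentration example you intend to present is a genuinely different route, and it checks out. For $n\ge 1/\mathfrak T$ one has $\norm{x_n}_{L^2(0,\mathfrak T;\Hr)}=1$, and on $[0,1/n]$
\begin{align*}
  (k*x_n)(t)=\sqrt n\,(1-\e^{-\gamma t})\le\gamma\sqrt n\,t,
\end{align*}
so the form is at most $\gamma/(2n)$.

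Its advantage is economy and generality. By Young's inequality on a short interval, every $x$ supported in an interval of length $\delta$ satisfies
\begin{align*}
  \Bigl|\int_0^{\mathfrak T}\inner{(K*x)(t)}{x(t)}\,\diff t\Bigr|
  \le\Bigl(\int_0^{\delta}\norm{K(r)}_{\Lop(\Hr)}\,\diff r\Bigr)\norm{x}_{L^2(0,\mathfrak T;\Hr)}^2
\end{align*}
for any $K\in\mathscr K_{\mathfrak T}(\Hr)$. Hence no integrable memory kernel whatsoever is instantaneously coercive; coercivity could only come from an instantaneous (atomic) part at $t=0$. Neither the explicit formula nor the storage identity is needed.

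The paper's route buys something different. Its test functions are uniformly bounded and spread over the whole interval. The failure is therefore exhibited as a genuinely high-frequency effect, consistent with the decay of $\widehat K_\gamma(\zeta)=\gamma/(\zeta+\gamma)$, rather than through spikes with $\norm{x_n}_{L^\infty}=\sqrt n\to\infty$. It also comes with a rate expressed through the memory-state energy of \cref{thm:passive-memory}.
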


\begin{proof}
Positivity follows from \cref{thm:passive-memory} by taking $\Hu=\Hr$, $L=\gamma\Id$, and $C=\sqrt{\gamma}\Id$. Fix $v\in\Hr$ with $\norm{v}_{\Hr}=1$ and let $x_n(t)=\sin(nt)v$. We set
\begin{align*}
  z_n(t):=\int_0^t\e^{-\gamma(t-s)}\sin(ns)\,\diff s
  =\frac{\gamma\sin(nt)-n\cos(nt)+n\e^{-\gamma t}}
         {\gamma^2+n^2}.
\end{align*}
Then, $\norm{z_n}_{L^\infty(0,\mathfrak T)}=O(n^{-1})$. Because
\begin{align*}
  \int_0^{\mathfrak T}\inner{(K_\gamma*x_n)(t)}{x_n(t)}\,\diff t
  =\frac{\gamma}{2}|z_n(\mathfrak T)|^2
   +\gamma^2\int_0^{\mathfrak T}|z_n(t)|^2\,\diff t,
\end{align*}
the left-hand side is $O(n^{-2})$, whereas
\begin{align*}
  \norm{x_n}_{L^2(0,\mathfrak T;\Hr)}^2
  =\int_0^{\mathfrak T}\sin^2(nt)\,\diff t
  \longrightarrow \frac{\mathfrak T}{2}.
\end{align*}
Therefore, \cref{eq:no-instantaneous-coercivity} cannot hold with any $c_{\mathfrak T}>0$.
\end{proof}

\begin{remark}[First structural lesson]\label{rem:structure-level}
Trajectory stability and energy compatibility are distinct requirements. \Cref{thm:kernel-stability} below controls the resolved trajectory under perturbation of a memory representation, whereas \cref{cor:passive-energy} identifies a storage--dissipation structure inherited from a particular hidden realisation. A reduction can satisfy one property without automatically satisfying the other.
\end{remark}

\section{Finite-horizon stable reduction}\label{sec:stable}
The exact kernel $K$ in \cref{eq:kernel} need not be cheap to evaluate. Indeed, computing $S_D(t)$ may be no easier than solving the unresolved problem itself. Reduction therefore begins only after the exact influence has been identified. One may approximate $K$, replace it by a finite internal-variable system, truncate its memory, or seek a Markovian approximation. The basic requirement is that the resolved trajectory should remain controlled under such operations.

We first isolate the reduced Volterra problem. Let $A:D(A)\subset\Hr\to\Hr$ generate a strongly continuous semigroup $S_A(t)$ satisfying
\begin{align}\label{eq:semigroup-bound}
  \norm{S_A(t)}_{\Lop(\Hr)}\le M\e^{\omega t}
  \quad (t\ge0)
\end{align}
for some $M\ge1$ and $\omega\in\R$.

We shall use the following kernel class. Let $\mathscr K_{\mathfrak T}(\Hr)$ consist of operator-valued functions $K:[0, \mathfrak T]\to\Lop(\Hr)$ such that $t\mapsto K(t)v$ is strongly measurable in $\Hr$ for every $v\in\Hr$, and
\begin{align*}
  \norm{K}_{\mathscr K_{\mathfrak T}(\Hr)}
  :=\int_0^{\mathfrak T}\norm{K(t)}_{\Lop(\Hr)}\,\diff t<\infty.
\end{align*}
Because $\Hr$ is separable, the operator norm of a strongly-operator-measurable kernel is measurable: it is the supremum over a countable dense subset of the unit sphere of the measurable functions $t\mapsto\norm{K(t)v}_{\Hr}$. We identify kernels whose difference has zero $\mathscr K_{\mathfrak T}$-norm; with this a.e. identification, the displayed quantity is a genuine norm. All convolutions below are Bochner integrals in the state space. This strong-operator formulation avoids an unnecessary Bochner-measurability assumption in the generally non-separable operator space $\Lop(\Hr)$. If $S_D$ is a strongly continuous semigroup and $B,C$ are bounded, then $t\mapsto BS_D(t)C$ belongs to $\mathscr K_{\mathfrak T}(\Hr)$ on any finite interval: strong measurability follows from strong continuity and local boundedness of the semigroup gives integrability of the measurable operator norm.

\begin{lemma}[Kernel convolution calculus]\label{lem:kernel-convolution}
Let $K\in\mathscr K_{\mathfrak T}(\Hr)$ and $z\in \mathcal{C}([0,\mathfrak T];\Hr)$. For each $t\in[0,\mathfrak T]$, the map
\begin{align*}
s \longmapsto K(t-s)z(s)
\end{align*}
is strongly measurable and Bochner integrable on $(0,t)$, up to modification on a null set. Therefore,
\begin{align*}
  (K*z)(t):=\int_0^t K(t-s)z(s)\,\diff s
\end{align*}
is well defined for any $t\in[0,\mathfrak T]$. Then, $K * z \in \mathcal{C}([0,\mathfrak T];\Hr)$, and satisfies
\begin{align}\label{eq:kernel-convolution-bound}
  \norm{K*z}_{\mathcal{C}([0,\mathfrak T];\Hr)}
  \le
  \norm{K}_{\mathscr K_{\mathfrak T}(\Hr)}
  \norm{z}_{\mathcal{C}([0,\mathfrak T];\Hr)}.
\end{align}
More generally, for $K_1,K_2\in\mathscr K_{\mathfrak T}(\Hr)$ and $z_1,z_2\in \mathcal{C}([0,\mathfrak T];\Hr)$,
\begin{align}\label{eq:kernel-convolution-difference}
\begin{split}
  \norm{K_1*z_1-K_2*z_2}_{\mathcal{C}([0,\mathfrak T];\Hr)}
  \le{}&
  \norm{K_1-K_2}_{\mathscr K_{\mathfrak T}(\Hr)}
  \norm{z_1}_{\mathcal{C}([0,\mathfrak T];\Hr)}\\
  &+\norm{K_2}_{\mathscr K_{\mathfrak T}(\Hr)}
  \norm{z_1-z_2}_{\mathcal{C}([0,\mathfrak T];\Hr)}.
\end{split}
\end{align}
\end{lemma}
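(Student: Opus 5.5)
The plan has three steps: measurability, the bounds, and continuity in $t$.

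\emph{Measurability.} I would first prove that $s\mapsto K(t-s)z(s)$ is strongly measurable. Since $z$ is continuous, it is a uniform limit on $[0,t]$ of step functions $z_n=\sum_i \mathbf 1_{I_i}(s)v_i$. For each step function, $s\mapsto K(t-s)z_n(s)=\sum_i\mathbf 1_{I_i}(s)K(t-s)v_i$ is strongly measurable. This is because $\tau\mapsto K(\tau)v_i$ is strongly measurable by the definition of $\mathscr K_{\mathfrak T}(\Hr)$, and the reflection $s\mapsto t-s$ preserves measurability. We have the pointwise estimate
\[
\norm{K(t-s)(z(s)-z_n(s))}_{\Hr}\le\norm{K(t-s)}_{\Lop(\Hr)}\norm{z-z_n}_\infty .
\]
The operator norm here is finite for a.e.\ $s$, since it is integrable. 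Hence $K(t-s)z(s)$ is an a.e.\ limit of strongly measurable functions and is therefore strongly measurable, after modification on the null set where $\norm{K}=\infty$. Integrability follows from
\[
\norm{K(t-s)z(s)}_{\Hr}\le\norm{K(t-s)}_{\Lop(\Hr)}\norm{z}_{\mathcal C},
\]
whose integral over $(0,t)$ is at most $\norm{K}_{\mathscr K_{\mathfrak T}}\norm{z}_{\mathcal C}$. Taking the supremum over $t$ gives \cref{eq:kernel-convolution-bound} once continuity is known.

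\emph{Difference estimate.} I would write
\[
K_1*z_1-K_2*z_2=(K_1-K_2)*z_1+K_2*(z_1-z_2).
\]
This uses linearity of the convolution in each argument, and $K_1-K_2\in\mathscr K_{\mathfrak T}$. Applying \cref{eq:kernel-convolution-bound} to each term then gives \cref{eq:kernel-convolution-difference}.

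\emph{Continuity of $t\mapsto(K*z)(t)$.} I expect this to be the main obstacle, because $K$ is merely integrable in norm and not continuous. I would use a density argument.
\begin{itemize}
\item[(a)] For a fixed $v\in\Hr$ and a scalar continuous $\varphi$, consider $z=\varphi v$. Then
\[
(K*z)(t)=\int_0^t\varphi(t-\sigma)K(\sigma)v\,\diff\sigma .
\]
For $t'<t$ I would split the difference into two pieces. The first is an integral over $(t',t)$, which is bounded by $\norm{\varphi}_\infty\int_{t'}^t\norm{K}$ and tends to zero by absolute continuity of the Lebesgue integral. The second is
\[
\int_0^{t'}\bigl(\varphi(t-\sigma)-\varphi(t'-\sigma)\bigr)K(\sigma)v\,\diff\sigma,
\]
which is bounded by the modulus of continuity of $\varphi$ times $\norm{K}_{\mathscr K}\norm v$.
\item[(b)] Finite sums $\sum_i\varphi_i v_i$ are dense in $\mathcal C([0,\mathfrak T];\Hr)$. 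This follows from uniform continuity, for instance using piecewise-linear interpolation in $t$ with hat functions.
\item[(c)] By the a priori bound, which holds pointwise in $t$ without any continuity, the map $z\mapsto K*z$ is a uniform limit of continuous functions. Hence $K*z$ is continuous.
\end{itemize}
This completes the plan. With continuity established, the supremum bounds from the first two steps are legitimate norms in $\mathcal C([0,\mathfrak T];\Hr)$.
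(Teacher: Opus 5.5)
Your proof is correct and follows the paper's route: step-function approximation of $z$ together with strong measurability of $K(\cdot)v$ for measurability, domination by $\norm{K(t-s)}_{\Lop(\Hr)}\norm{z}_{\mathcal{C}([0,\mathfrak T];\Hr)}$ for integrability and the sup bound, and the splitting $(K_1-K_2)*z_1+K_2*(z_1-z_2)$ for the difference estimate. The one difference is the continuity step, where your density steps (b)--(c) are valid but unnecessary: the paper applies your argument (a) directly to a general continuous $z$ after substituting $r=t-s$, using the modulus of continuity of $z$ in place of that of $\varphi$, which needs only the measurability you have already established.
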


\begin{proof}
Because the range of a continuous function on $[0,\mathfrak T]$ is compact and hence separable, $z$ can be approximated uniformly by simple functions taking finitely many values. The assumed strong measurability of $K(\cdot)v$ for each fixed $v$, followed by this approximation, shows that $s\mapsto K(t-s)z(s)$ is strongly measurable. The bound
\begin{align*}
  \norm{K(t-s)z(s)}_{\Hr}
  \le
  \norm{K(t-s)}_{\Lop(\Hr)}\,\norm{z}_{\mathcal{C}([0,\mathfrak T];\Hr)}
\end{align*}
gives Bochner integrability. Writing equivalently
\begin{align*}
  (K*z)(t)=\int_0^t K(r)z(t-r)\,\diff r,
\end{align*}
uniform continuity of $z$, together with absolute continuity of the integral of $\norm{K(r)}_{\Lop(\Hr)}$, proves continuity in $t$. Estimate \cref{eq:kernel-convolution-bound} follows directly. Finally,
\begin{align*}
  K_1*z_1-K_2*z_2
  =(K_1-K_2)*z_1+K_2*(z_1-z_2),
\end{align*}
and a second application of \cref{eq:kernel-convolution-bound} gives \cref{eq:kernel-convolution-difference}.
\end{proof}

On a fixed interval $[0,\mathfrak T]$, we consider
\begin{align}\label{eq:volterra-reduced}
  \frac{\mathrm{d}}{\mathrm{d}t}z(t)
  =Az(t)+(K*z)(t)+F(t),
  \quad z(0)=z_0,
\end{align}
where
\begin{align*}
  K\in \mathscr K_{\mathfrak T}(\Hr),
  \quad
  F\in L^1(0,\mathfrak T;\Hr).
\end{align*}

\begin{definition}[Finite-horizon stable reduction]\label{def:stable-reduction}
Let $(K,F)$ denote the exact memory and effective forcing associated with a resolved model on $[0,\mathfrak T]$. A family of reduced models
\begin{align}\label{eq:reduction-family}
  \frac{\mathrm{d}}{\mathrm{d}t}z_n(t)
  =Az_n(t)+(K_n*z_n)(t)+F_n(t),
  \quad z_n(0)=z_{0,n},
\end{align}
is called a \emph{stable reduction on $[0,\mathfrak T]$} if the following properties hold.
\begin{enumerate}[label=\textup{(S\arabic*)},leftmargin=3.2em]
  \item Each reduced problem is well posed in the chosen trajectory space.
  \item Uniformly bounded reduction data produce uniformly bounded resolved trajectories.
  \item Consistent approximation of the memory, effective forcing, and initial data produces convergence of the resolved trajectories; in particular,
  \begin{align*}
    K_n\to K,
    \quad
    F_n\to F,
    \quad
    z_{0,n}\to z_0
  \end{align*}
  in the specified data topology implies $z_n\to z$ in the trajectory topology.
\end{enumerate}
In the present note, the data topology is
\begin{align*}
  \mathscr K_{\mathfrak T}(\Hr)
  \times L^1(0,\mathfrak T;\Hr)\times\Hr,
\end{align*}
and the trajectory topology is $\mathcal{C}([0,\mathfrak T];\Hr)$.
\end{definition}

\begin{remark}
The word \emph{stable} in \cref{def:stable-reduction} is not used as a synonym for spectral stability of an equilibrium. It refers to stability of the reduction operation itself: controlled changes in the representation of unresolved influence should not produce uncontrolled changes in the resolved evolution.
\end{remark}

\begin{remark}[Level of the present definition]\label{rem:level-one}
\Cref{def:stable-reduction} is a trajectory-based notion. It does not by itself assert preservation of an energy or dissipation structure. The passive class in \cref{cor:passive-energy} gives a concrete stronger requirement.
\end{remark}

\begin{theorem}[Finite-horizon bound]\label{thm:finite-horizon-bound}
Let \cref{eq:semigroup-bound} hold and let
\begin{align*}
  K\in \mathscr K_{\mathfrak T}(\Hr),
  \quad
  F\in L^1(0,\mathfrak T;\Hr).
\end{align*}
Then, \cref{eq:volterra-reduced} has a unique mild solution $z\in \mathcal{C}([0,\mathfrak T];\Hr)$. Furthermore, with
\begin{align*}
  \sigma_{\mathfrak T}:=M\e^{\max\{\omega,0\}\mathfrak T},
  \quad
  \kappa_{\mathfrak T}:=\norm{K}_{\mathscr K_{\mathfrak T}(\Hr)},
\end{align*}
one has
\begin{align}\label{eq:finite-horizon-bound}
  \norm{z}_{\mathcal{C}([0,\mathfrak T];\Hr)}
  \le
  \sigma_{\mathfrak T}
  \left(
    \norm{z_0}_{\Hr}
    +\norm{F}_{L^1(0,\mathfrak T;\Hr)}
  \right)
  \exp\!\left(\sigma_{\mathfrak T}\kappa_{\mathfrak T} \mathfrak T\right).
\end{align}
\end{theorem}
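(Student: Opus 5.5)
We recast \cref{eq:volterra-reduced} in mild form as the fixed-point problem
\begin{align*}
  z(t)=S_A(t)z_0+\int_0^t S_A(t-s)\bigl\{(K*z)(s)+F(s)\bigr\}\,\diff s
  =:(\Phi z)(t),
\end{align*}
posed on $\mathcal{C}([0,\mathfrak T];\Hr)$. The map $\Phi$ sends $\mathcal{C}([0,\mathfrak T];\Hr)$ into itself, for the following reasons. The term $S_A(\cdot)z_0$ is continuous by strong continuity. By \cref{lem:kernel-convolution}, $K*z\in\mathcal{C}([0,\mathfrak T];\Hr)$, so $K*z+F\in L^1(0,\mathfrak T;\Hr)$. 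Finally, the convolution of a $C_0$-semigroup with an $L^1$ function is continuous; this is the standard mild-solution fact from \cite{Pazy1983}, which we will cite rather than reprove. Throughout we use $\norm{S_A(t)}_{\Lop(\Hr)}\le\sigma_{\mathfrak T}$ for $t\in[0,\mathfrak T]$, which follows directly from \cref{eq:semigroup-bound}.

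For existence and uniqueness, we avoid any smallness assumption on $\kappa_{\mathfrak T}$ by an iterated-kernel estimate rather than a one-step contraction. For $z_1,z_2\in\mathcal{C}([0,\mathfrak T];\Hr)$, set $w=z_1-z_2$ and $m(t):=\sup_{r\le t}\norm{w(r)}$. Applying \cref{lem:kernel-convolution} on $[0,s]$, we get $\norm{(K*w)(s)}\le\kappa_{\mathfrak T}m(s)$, and hence
\begin{align*}
  \norm{(\Phi z_1-\Phi z_2)(t)}\le \sigma_{\mathfrak T}\kappa_{\mathfrak T}\int_0^t m(s)\,\diff s .
\end{align*}
Since the right-hand side is nondecreasing in $t$, the same bound holds for $\sup_{r\le t}$. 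Induction then gives
\begin{align*}
  \norm{\Phi^n z_1-\Phi^n z_2}_{\mathcal C}\le\frac{(\sigma_{\mathfrak T}\kappa_{\mathfrak T}\mathfrak T)^n}{n!}\norm{z_1-z_2}_{\mathcal C},
\end{align*}
so some power of $\Phi$ is a contraction, and $\Phi$ has a unique fixed point. An equivalent alternative is to use the weighted norm $\sup_t \e^{-\beta t}\norm{z(t)}$ with $\beta$ large.

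For the bound \cref{eq:finite-horizon-bound}, let $z$ be the fixed point and $m(t):=\sup_{r\le t}\norm{z(r)}$. Estimating each term of the mild formula gives, for $r\le t$,
\begin{align*}
  \norm{z(r)}\le\sigma_{\mathfrak T}\bigl(\norm{z_0}+\norm{F}_{L^1(0,\mathfrak T;\Hr)}\bigr)+\sigma_{\mathfrak T}\kappa_{\mathfrak T}\int_0^t m(s)\,\diff s .
\end{align*}
Here we use $\norm{(K*z)(s)}\le\int_0^s\norm{K(s-\tau)}\,\norm{z(\tau)}\,\diff\tau\le\kappa_{\mathfrak T}m(s)$. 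Taking the supremum over $r\le t$ yields an integral inequality for the continuous nondecreasing function $m$. Gronwall's lemma then gives
\begin{align*}
  m(t)\le\sigma_{\mathfrak T}\bigl(\norm{z_0}+\norm{F}_{L^1}\bigr)\e^{\sigma_{\mathfrak T}\kappa_{\mathfrak T}t},
\end{align*}
and evaluating at $t=\mathfrak T$ gives \cref{eq:finite-horizon-bound}.

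The main obstacle is the step where $\norm{(K*z)(s)}$ is bounded by $\kappa_{\mathfrak T}$ times the running supremum. A bound of the form $\norm{K}_{L^1}\norm{z}_{\mathcal C}$ on the whole interval would only give a global contraction when $\sigma_{\mathfrak T}\kappa_{\mathfrak T}\mathfrak T<1$. Passing to the causal running maximum $m$ is what makes both Gronwall and the factorial iteration work. The remaining measurability issues are already settled by \cref{lem:kernel-convolution}.
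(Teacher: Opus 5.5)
Your proof is correct and follows essentially the same route as the paper: the mild formulation, the causal estimate $\norm{(K*v)(s)}_{\Hr}\le\kappa_{\mathfrak T}\sup_{\tau\le s}\norm{v(\tau)}_{\Hr}$ giving the factorial bound $(\sigma_{\mathfrak T}\kappa_{\mathfrak T}t)^m/m!$ for the iterates, and Gronwall for the estimate with the same constants. The differences are cosmetic. You get existence and uniqueness at once from contractivity of a power of $\Phi$, whereas the paper sums the Neumann--Volterra series and gets uniqueness by applying Gronwall to differences. You also run Gronwall on the running maximum, whereas the paper applies Fubini to $\norm{z(t)}_{\Hr}$.
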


\begin{proof}
The mild form is
\begin{align}\label{eq:mild-volterra}
  z(t)= S_A(t)z_0
  +\int_0^t S_A(t-s)F(s)\,\diff s +\int_0^t S_A(t-s)
     \int_0^s K(s-r)z(r)\,\diff r\,\diff s.
\end{align}
For completeness, we also show the successive-approximation argument. We define
\begin{align*}
  b(t):=S_A(t)z_0+\int_0^tS_A(t-s)F(s)\,\diff s
\end{align*}
and, on $\mathcal{C}([0,\mathfrak T];\Hr)$,
\begin{align*}
  (\mathcal Vv)(t)
  :=\int_0^tS_A(t-s)(K*v)(s)\,\diff s.
\end{align*}
The standard continuity property of semigroup convolutions with $L^1$-data, together with \cref{lem:kernel-convolution}, shows that $b\in \mathcal{C}([0,\mathfrak T];\Hr)$ and that $\mathcal V$ maps $\mathcal{C}([0,\mathfrak T];\Hr)$ into itself. Furthermore, for $0\le t\le \mathfrak T$,
\begin{align*}
  \sup_{0\le s\le t}\norm{(\mathcal Vv)(s)}_{\Hr}
  \le
  \sigma_{\mathfrak T}\kappa_{\mathfrak T}
  \int_0^t
  \sup_{0\le \tau\le r}\norm{v(\tau)}_{\Hr}\,\diff r.
\end{align*}
Iteration therefore gives
\begin{align*}
  \norm{\mathcal V^m b}_{\mathcal{C}([0,t];\Hr)}
  \le
  \frac{(\sigma_{\mathfrak T}\kappa_{\mathfrak T}t)^m}{m!}
  \norm{b}_{\mathcal{C}([0,\mathfrak T];\Hr)}.
\end{align*}
Therefore, the Neumann--Volterra series $\sum_{m=0}^{\infty}\mathcal V^m b$ converges uniformly and yields a mild solution of \cref{eq:mild-volterra}. This is the usual successive-approximation construction for Volterra equations; compare \cite{GripenbergLondenStaffans1990,Pruss1993}.

We set $u(t):=\norm{z(t)}_{\Hr}$. From \cref{eq:semigroup-bound,eq:mild-volterra}, for $0\le t\le \mathfrak T$,
\begin{align*}
\begin{split}
  u(t)
  \le{}&
  \sigma_{\mathfrak T}
  \left(
    \norm{z_0}_{\Hr}
    +\norm{F}_{L^1(0,\mathfrak T;\Hr)}
  \right)
  +\sigma_{\mathfrak T}
  \int_0^t\int_0^s
  \norm{K(s-r)}_{\Lop(\Hr)}u(r)
  \,\diff r\,\diff s.
\end{split}
\end{align*}
Changing the order of integration gives
\begin{align*}
  \int_0^t\int_0^s
  \norm{K(s-r)}_{\Lop(\Hr)}u(r)
  \,\diff r\,\diff s
  \le
  \kappa_{\mathfrak T}\int_0^t u(r)\,\diff r.
\end{align*}
Therefore, Gronwall's inequality yields \cref{eq:finite-horizon-bound}. Applying the same estimate to the difference of two solutions proves uniqueness.
\end{proof}

\begin{theorem}[Kernel stability]\label{thm:kernel-stability}
For $i=1,2$, let $z_i$ solve
\begin{align*}
  \frac{\mathrm{d}}{\mathrm{d}t}z_i(t)
  =Az_i(t)+(K_i*z_i)(t)+F_i(t),
  \quad z_i(0)=z_{0,i},
\end{align*}
under the assumptions of \cref{thm:finite-horizon-bound}. We set
\begin{align*}
  \kappa_*:=\max_{i=1,2}
  \norm{K_i}_{\mathscr K_{\mathfrak T}(\Hr)}.
\end{align*}
Then,
\begin{align}\label{eq:kernel-stability}
\begin{split}
  \norm{z_1-z_2}_{\mathcal{C}([0,\mathfrak T];\Hr)}
  &\le
  \sigma_{\mathfrak T}
  \exp\!\left(\sigma_{\mathfrak T}\kappa_*\mathfrak T\right)
  \Bigl[
    \norm{z_{0,1}-z_{0,2}}_{\Hr}
    +\norm{F_1-F_2}_{L^1(0,\mathfrak T;\Hr)} \\
    &\quad
    +\mathfrak T\norm{K_1-K_2}_{\mathscr K_{\mathfrak T}(\Hr)}
      \norm{z_2}_{\mathcal{C}([0,\mathfrak T];\Hr)}
  \Bigr].
\end{split}
\end{align}
In particular, if
\begin{align*}
  z_{0,n}\to z_0,
  \quad
  F_n\to F
  \quad\text{in }L^1(0,\mathfrak T;\Hr),
  \quad
  K_n\to K
  \quad\text{in }\mathscr K_{\mathfrak T}(\Hr),
\end{align*}
then the corresponding solutions satisfy
\begin{align*}
  z_n\to z
  \quad\text{in }\mathcal{C}([0,\mathfrak T];\Hr).
\end{align*}
\end{theorem}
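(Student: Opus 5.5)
We argue directly from the mild formulation \cref{eq:mild-volterra} for each $z_i$, which exists uniquely by \cref{thm:finite-horizon-bound}. We set $w:=z_1-z_2$ and subtract the two mild identities. This gives
\begin{align*}
  w(t)=S_A(t)(z_{0,1}-z_{0,2})
  +\int_0^tS_A(t-s)\{F_1(s)-F_2(s)\}\,\diff s
  +\int_0^tS_A(t-s)\{(K_1*z_1)(s)-(K_2*z_2)(s)\}\,\diff s .
\end{align*}
The key choice is how to split the memory difference. We use
\begin{align*}
  K_1*z_1-K_2*z_2=K_1*w+(K_1-K_2)*z_2,
\end{align*}
rather than the ordering in \cref{eq:kernel-convolution-difference}. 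This way the unknown $w$ is convolved with a kernel of norm at most $\kappa_*$, and the kernel error multiplies the known trajectory $z_2$.

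We then estimate $u(t):=\norm{w(t)}_{\Hr}$ using \cref{eq:semigroup-bound}, with $\norm{S_A(t-s)}\le\sigma_{\mathfrak T}$ on $[0,\mathfrak T]$. The initial and forcing contributions are bounded by $\sigma_{\mathfrak T}(\norm{z_{0,1}-z_{0,2}}+\norm{F_1-F_2}_{L^1})$.

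For the kernel-error term we apply \cref{eq:kernel-convolution-bound} pointwise in $s$. This gives $\norm{((K_1-K_2)*z_2)(s)}\le\norm{K_1-K_2}_{\mathscr K_{\mathfrak T}}\norm{z_2}_{\mathcal C}$. Integrating over $s\in(0,t)$ contributes at most $\sigma_{\mathfrak T}\mathfrak T\norm{K_1-K_2}_{\mathscr K_{\mathfrak T}}\norm{z_2}_{\mathcal C}$, which accounts for the factor $\mathfrak T$ in \cref{eq:kernel-stability}.

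For the term involving $K_1*w$ we repeat the Fubini step from the proof of \cref{thm:finite-horizon-bound}:
\begin{align*}
  \int_0^t\int_0^s\norm{K_1(s-r)}u(r)\,\diff r\,\diff s\le\kappa_*\int_0^tu(r)\,\diff r .
\end{align*}
Collecting terms yields
\begin{align*}
  u(t)\le\sigma_{\mathfrak T}\Bigl[\norm{z_{0,1}-z_{0,2}}_{\Hr}+\norm{F_1-F_2}_{L^1}+\mathfrak T\norm{K_1-K_2}_{\mathscr K_{\mathfrak T}}\norm{z_2}_{\mathcal C}\Bigr]+\sigma_{\mathfrak T}\kappa_*\int_0^tu(r)\,\diff r .
\end{align*}
Since $u$ is continuous, Gronwall's inequality gives $u(t)\le[\cdots]\,\sigma_{\mathfrak T}\exp(\sigma_{\mathfrak T}\kappa_*t)$. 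Taking the supremum over $t\le\mathfrak T$ gives \cref{eq:kernel-stability}.

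For the convergence statement we take $K_1=K_n$ and $K_2=K$, so that $z_2=z$ is fixed. Convergence $K_n\to K$ in $\mathscr K_{\mathfrak T}$ implies $\sup_n\norm{K_n}_{\mathscr K_{\mathfrak T}}<\infty$, hence $\kappa_*$ is bounded uniformly in $n$. The right-hand side of \cref{eq:kernel-stability} then tends to zero. If instead one puts $z_n$ in the role of $z_2$, \cref{thm:finite-horizon-bound} still gives a uniform bound on $\norm{z_n}_{\mathcal C}$, because the data are bounded.

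The main obstacle is measure-theoretic rather than analytic. We must justify that the double integrals are legitimate Bochner integrals, that Fubini and Tonelli apply to the measurable scalar function $\norm{K_1(s-r)}_{\Lop(\Hr)}u(r)$, and that $s\mapsto S_A(t-s)(K_i*z_i)(s)$ is integrable. All of this follows from \cref{lem:kernel-convolution}, since the $K_i*z_i$ are continuous, together with the separability remarks preceding it. Once these points are settled, the estimate is routine.
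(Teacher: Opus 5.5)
Your proposal is correct and follows essentially the same route as the paper: subtract the mild formulations, split the memory difference as $K_1*w+(K_1-K_2)*z_2$, bound the consistency term by $\sigma_{\mathfrak T}\mathfrak T\norm{K_1-K_2}_{\mathscr K_{\mathfrak T}(\Hr)}\norm{z_2}_{\mathcal{C}([0,\mathfrak T];\Hr)}$, control $K_1*w$ by the same Fubini step as in \cref{thm:finite-horizon-bound}, and close with Gronwall. For the convergence claim, you put the fixed limit $z$ in the $z_2$ slot, so you only need boundedness of $\norm{K_n}_{\mathscr K_{\mathfrak T}(\Hr)}$; this is a slight simplification of the paper's appeal to the uniform bound on $z_n$ from \cref{thm:finite-horizon-bound}.
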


\begin{proof}
We set $w:=z_1-z_2$. Subtracting the two mild equations gives a term containing $K_1*w$ and the consistency term
\begin{align*}
  ((K_1-K_2)*z_2)(t).
\end{align*}
Arguing as in the proof of \cref{thm:finite-horizon-bound},
\begin{align*}
\begin{split}
  \norm{w(t)}_{\Hr}
  \le{}&
  \sigma_{\mathfrak T}
  \left(
    \norm{z_{0,1}-z_{0,2}}_{\Hr}
    +\norm{F_1-F_2}_{L^1(0,\mathfrak T;\Hr)}
  \right)\\
  &+\sigma_{\mathfrak T}\kappa_*
  \int_0^t\norm{w(r)}_{\Hr}\,\diff r
  +\sigma_{\mathfrak T} \mathfrak T
  \norm{K_1-K_2}_{\mathscr K_{\mathfrak T}(\Hr)}
  \norm{z_2}_{\mathcal{C}([0,\mathfrak T];\Hr)}.
\end{split}
\end{align*}
Gronwall's inequality proves \cref{eq:kernel-stability}. The convergence statement follows from \cref{thm:finite-horizon-bound}, which gives a uniform bound for $z_n$ once the data converge.
\end{proof}

\begin{corollary}[A sufficient criterion for stable reduction]\label{cor:stable-reduction}
Suppose that a reduction family $(K_n,F_n,z_{0,n})$ satisfies
\begin{align*}
  \sup_n\norm{K_n}_{\mathscr K_{\mathfrak T}(\Hr)}<\infty,
  \quad
  \sup_n\norm{F_n}_{L^1(0,\mathfrak T;\Hr)}<\infty,
  \quad
  \sup_n\norm{z_{0,n}}_{\Hr}<\infty,
\end{align*}
and converges to $(K,F,z_0)$ in the data topology of \cref{def:stable-reduction}. Then \cref{eq:reduction-family} is a stable reduction on $[0,\mathfrak T]$ in the sense of \cref{def:stable-reduction}.
\end{corollary}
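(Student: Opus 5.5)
The plan is to verify the three properties (S1)--(S3) of \cref{def:stable-reduction} one at a time, taking the data topology $\mathscr K_{\mathfrak T}(\Hr)\times L^1(0,\mathfrak T;\Hr)\times\Hr$ and the trajectory space $\mathcal{C}([0,\mathfrak T];\Hr)$. Each property should follow directly from \cref{thm:finite-horizon-bound} or \cref{thm:kernel-stability}. No new analysis is needed; the corollary repackages those two theorems.

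\textbf{(S1).} For each $n$ we have $K_n\in\mathscr K_{\mathfrak T}(\Hr)$, $F_n\in L^1(0,\mathfrak T;\Hr)$ and $z_{0,n}\in\Hr$. The hypotheses are finite suprema, so each term is finite in particular. \Cref{thm:finite-horizon-bound} then gives a unique mild solution $z_n\in\mathcal{C}([0,\mathfrak T];\Hr)$ of \cref{eq:reduction-family}, which is well-posedness in the chosen trajectory space. The limit data $(K,F,z_0)$ lie in the same spaces, so the limit problem also has a unique mild solution $z$.

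\textbf{(S2).} Set
\[
\kappa:=\sup_n\norm{K_n}_{\mathscr K_{\mathfrak T}(\Hr)},\quad
\beta:=\sup_n\bigl(\norm{z_{0,n}}_{\Hr}+\norm{F_n}_{L^1(0,\mathfrak T;\Hr)}\bigr).
\]
Estimate \cref{eq:finite-horizon-bound} is monotone in $\kappa_{\mathfrak T}$ and in the data norms, and $\sigma_{\mathfrak T}$ does not depend on $n$. Hence
\[
\sup_n\norm{z_n}_{\mathcal{C}([0,\mathfrak T];\Hr)}\le\sigma_{\mathfrak T}\,\beta\,\exp(\sigma_{\mathfrak T}\kappa\mathfrak T).
\]
This is the required uniform bound.

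\textbf{(S3).} Apply \cref{thm:kernel-stability} with $(K_1,F_1,z_{0,1})=(K_n,F_n,z_{0,n})$ and $(K_2,F_2,z_{0,2})=(K,F,z_0)$. This bound puts $\norm{z_2}_{\mathcal{C}}=\norm{z}_{\mathcal{C}}$ in the consistency term, and that norm is a fixed finite number by (S1). The constant $\kappa_*=\max\{\norm{K_n},\norm{K}\}$ is bounded uniformly in $n$ by $\max\{\kappa,\norm{K}_{\mathscr K_{\mathfrak T}}\}$, so the exponential prefactor stays bounded. The bracket in \cref{eq:kernel-stability} tends to zero by the assumed convergence of the data, so $z_n\to z$ in $\mathcal{C}([0,\mathfrak T];\Hr)$.

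The only point that needs care is keeping the Gronwall constant uniform in $n$. This is exactly why the uniform bound on $\norm{K_n}_{\mathscr K_{\mathfrak T}}$ is assumed, even though convergence of $K_n$ in norm would already give it. Choosing the fixed limit solution as $z_2$ in \cref{thm:kernel-stability} avoids needing the bound from (S2) in the consistency term. Either choice works, because (S2) supplies that bound anyway.
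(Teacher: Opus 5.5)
Your proposal is correct and follows the route the paper intends. The paper gives no separate proof of this corollary; it is meant as a direct consequence of \cref{thm:finite-horizon-bound} (well-posedness and the uniform bound, giving (S1)--(S2)) and \cref{thm:kernel-stability} (convergence, giving (S3)). The only cosmetic difference is in (S3): the paper's convergence argument relies on the uniform bound for $z_n$, whereas you place the fixed limit $z$ in the consistency term, which is equally valid.
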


\begin{remark}[Why the topology matters]\label{rem:topology}
The topology in \cref{def:stable-reduction} is part of the model. Integrated operator-norm convergence is sufficient for the present finite-horizon theory, but different limiting mechanisms may require different topologies. No such extension is used in the results below.
\end{remark}

\section{Stable reduction at the level of hidden realisations}\label{sec:realisation}
The preceding section starts from a memory kernel and asks whether an approximation of that kernel gives a controlled approximation of the resolved trajectory. For unresolved dynamics, however, the more primitive objects are the hidden propagator, the two coupling operators, the hidden initial state, and the hidden forcing. We now connect perturbations of these objects directly to the reduced evolution.

Throughout this section the unresolved space $\Hu$ is fixed. This restriction is made only to keep the first realisation-level theorem transparent. Approximation between different hidden spaces requires identification, lifting, or projection operators and is a separate layer of analysis.

\subsection{Perturbing the hidden propagator and couplings}
For $i\in\{1,2\}$, let $S_i(t)$ be strongly continuous semigroups on $\Hu$, and let
\begin{align*}
  B_i\in\Lop(\Hu,\Hr),
  \quad
  C_i\in\Lop(\Hr,\Hu).
\end{align*}
Let $y_{0,i}\in\Hu$ and $g_i\in L^1(0,\mathfrak T;\Hu)$. The influence generated by the $i$-th hidden realisation is
\begin{align}
  K_i(t)&:=B_iS_i(t)C_i,\label{eq:realisation-kernel}\\
  F_{{\rm u},i}(t)
  &:=B_iS_i(t)y_{0,i}
   +B_i\int_0^tS_i(t-s)g_i(s)\,\diff s.\label{eq:realisation-forcing}
\end{align}
We define the propagator discrepancy
\begin{align}\label{eq:semigroup-discrepancy}
  \delta_S(\mathfrak T)
  :=\int_0^{\mathfrak T}
     \norm{S_1(t)-S_2(t)}_{\Lop(\Hu)}\,\diff t.
\end{align}

\begin{proposition}[Propagation of hidden-realisation errors]\label{prop:hidden-influence-bound}
Assume that
\begin{align}\label{eq:hidden-uniform-bound}
  \norm{S_i(t)}_{\Lop(\Hu)}\le \eta_{\mathfrak T}
  \quad
  (0\le t\le \mathfrak T,\ i=1,2)
\end{align}
for some $\eta_{\mathfrak T}\ge1$. Then,
\begin{align}\label{eq:hidden-kernel-bound}
\begin{split}
  \norm{K_1-K_2}_{\mathscr K_{\mathfrak T}(\Hr)}
  \le{}&
  \mathfrak T\eta_{\mathfrak T}\norm{B_1-B_2}_{\Lop(\Hu,\Hr)}\norm{C_1}_{\Lop(\Hr,\Hu)}\\
  &+\norm{B_2}_{\Lop(\Hu,\Hr)}\,\delta_S(\mathfrak T)\norm{C_1}_{\Lop(\Hr,\Hu)}\\
  &+\mathfrak T\eta_{\mathfrak T}\norm{B_2}_{\Lop(\Hu,\Hr)}\norm{C_1-C_2}_{\Lop(\Hr,\Hu)}.
\end{split}
\end{align}
Furthermore,
\begin{align}\label{eq:hidden-forcing-bound}
\begin{split}
  \norm{F_{{\rm u},1}-F_{{\rm u},2}}_{L^1(0,\mathfrak T;\Hr)}
  \le{}&
  \Bigl[
    \mathfrak T\eta_{\mathfrak T}\norm{B_1-B_2}_{\Lop(\Hu,\Hr)}
    +\norm{B_2}_{\Lop(\Hu,\Hr)}\delta_S(\mathfrak T)
  \Bigr]\\
  &\quad\times
  \Bigl(
    \norm{y_{0,1}}_{\Hu}
    +\norm{g_1}_{L^1(0,\mathfrak T;\Hu)}
  \Bigr)\\
  &+\mathfrak T\eta_{\mathfrak T}\norm{B_2}_{\Lop(\Hu,\Hr)}
  \Bigl(
    \norm{y_{0,1}-y_{0,2}}_{\Hu}
    +\norm{g_1-g_2}_{L^1(0,\mathfrak T;\Hu)}
  \Bigr).
\end{split}
\end{align}
\end{proposition}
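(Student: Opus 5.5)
The plan is a telescoping decomposition of each difference, followed by pointwise operator-norm bounds and integration over $[0,\mathfrak T]$. Measurability of all operator-norm integrands is already secured by the discussion preceding \cref{lem:kernel-convolution}. In particular, $t\mapsto\norm{S_1(t)-S_2(t)}_{\Lop(\Hu)}$ is a supremum of countably many measurable functions, so $\delta_S(\mathfrak T)$ is well defined.

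For the kernel bound \cref{eq:hidden-kernel-bound}, I will write, for each $t\in[0,\mathfrak T]$,
\begin{align*}
  K_1(t)-K_2(t)
  =(B_1-B_2)S_1(t)C_1
   +B_2\bigl(S_1(t)-S_2(t)\bigr)C_1
   +B_2S_2(t)(C_1-C_2).
\end{align*}
I will take operator norms and use \cref{eq:hidden-uniform-bound} in the first and third terms. This gives the pointwise bound
$\eta_{\mathfrak T}\norm{B_1-B_2}\norm{C_1}+\norm{B_2}\norm{S_1(t)-S_2(t)}\norm{C_1}+\eta_{\mathfrak T}\norm{B_2}\norm{C_1-C_2}$.
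Integrating over $(0,\mathfrak T)$ then yields exactly the three terms of \cref{eq:hidden-kernel-bound}, the middle one producing $\delta_S(\mathfrak T)$.

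For the forcing bound \cref{eq:hidden-forcing-bound}, I will treat the initial-state part and the convolution part separately, using the same telescoping in the order $B$, then $S$, then data. For the initial-state part,
$B_1S_1y_{0,1}-B_2S_2y_{0,2}=(B_1-B_2)S_1y_{0,1}+B_2(S_1-S_2)y_{0,1}+B_2S_2(y_{0,1}-y_{0,2})$.
Integrating the norm over $(0,\mathfrak T)$ gives
$\mathfrak T\eta_{\mathfrak T}\norm{B_1-B_2}\norm{y_{0,1}}+\norm{B_2}\delta_S(\mathfrak T)\norm{y_{0,1}}+\mathfrak T\eta_{\mathfrak T}\norm{B_2}\norm{y_{0,1}-y_{0,2}}$.
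For the convolution part, I will split in the same way into $(B_1-B_2)(S_1*g_1)$, $B_2((S_1-S_2)*g_1)$ and $B_2(S_2*(g_1-g_2))$. The first and third terms are bounded pointwise by $\eta_{\mathfrak T}\norm{g}_{L^1}$ times the coupling factor, so their $L^1(0,\mathfrak T)$ norms contribute a factor $\mathfrak T$.

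The main step is the middle convolution term, where I must obtain $\delta_S(\mathfrak T)\norm{g_1}_{L^1}$ without any factor $\mathfrak T$. I will use Young's inequality for the $L^1$ norm: by Fubini--Tonelli,
\begin{align*}
  \int_0^{\mathfrak T}\Bigl\lVert\int_0^t (S_1-S_2)(t-s)g_1(s)\,\diff s\Bigr\rVert\,\diff t
  \le\int_0^{\mathfrak T}\norm{g_1(s)}\int_s^{\mathfrak T}\norm{(S_1-S_2)(t-s)}\,\diff t\,\diff s
  \le\delta_S(\mathfrak T)\norm{g_1}_{L^1}.
\end{align*}
Bochner measurability of $s\mapsto(S_1-S_2)(t-s)g_1(s)$ follows as in \cref{lem:kernel-convolution}, approximating $g_1$ by simple functions. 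Collecting the six terms and grouping the coefficients of $\norm{y_{0,1}}+\norm{g_1}_{L^1}$ and of $\norm{y_{0,1}-y_{0,2}}+\norm{g_1-g_2}_{L^1}$ then gives \cref{eq:hidden-forcing-bound}.
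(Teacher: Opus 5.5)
Your proposal is correct and follows essentially the same route as the paper. It uses the same three-term telescoping (coupling, then propagator, then data) for the kernel and for both parts of the effective forcing. It bounds the outer terms with the uniform bound $\eta_{\mathfrak T}$ and obtains $\delta_S(\mathfrak T)\norm{g_1}_{L^1(0,\mathfrak T;\Hu)}$ for the middle convolution term by the Young/Fubini--Tonelli estimate, with your measurability remarks simply making explicit what the paper leaves implicit.
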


\begin{proof}
For the kernels, use the exact decomposition
\begin{align*}
\begin{split}
  B_1S_1C_1-B_2S_2C_2
  ={}&(B_1-B_2)S_1C_1\\
     &+B_2(S_1-S_2)C_1
      +B_2S_2(C_1-C_2).
\end{split}
\end{align*}
Integration over $(0,\mathfrak T)$, together with \cref{eq:hidden-uniform-bound}, gives \cref{eq:hidden-kernel-bound}.

For the first term in \cref{eq:realisation-forcing}, write
\begin{align*}
\begin{split}
 B_1S_1y_{0,1}-B_2S_2y_{0,2}
 ={}&(B_1-B_2)S_1y_{0,1} +B_2(S_1-S_2)y_{0,1}
 +B_2S_2(y_{0,1}-y_{0,2}).
\end{split}
\end{align*}
Its $L^1(0,\mathfrak T;\Hr)$-norm is bounded by the corresponding three terms in \cref{eq:hidden-forcing-bound}. For the convolution part, Young's inequality on $(0,\mathfrak T)$ gives
\begin{align*}
  \norm{S_i*g}_{L^1(0,\mathfrak T;\Hu)}
  \le \mathfrak T\eta_{\mathfrak T}\norm{g}_{L^1(0,\mathfrak T;\Hu)},
\end{align*}
and
\begin{align*}
  \norm{(S_1-S_2)*g}_{L^1(0,\mathfrak T;\Hu)}
  \le \delta_S(\mathfrak T)\norm{g}_{L^1(0,\mathfrak T;\Hu)}.
\end{align*}
Applying the same three-term decomposition proves \cref{eq:hidden-forcing-bound}.
\end{proof}

\begin{definition}[Resolved influence and finite-horizon equivalence]\label{def:resolved-influence}
A hidden realisation on $[0,\mathfrak T]$ is the collection
\begin{align*}
  \mathfrak R=(S,B,C,y_0,g)
\end{align*}
which induces $(K,F_{\rm u})$ through \cref{eq:realisation-kernel,eq:realisation-forcing}. For two such realisations define
\begin{align}\label{eq:influence-pseudometric}
  \mathfrak d_{\mathfrak T}(\mathfrak R_1,\mathfrak R_2)
  :=
  \norm{K_1-K_2}_{\mathscr K_{\mathfrak T}(\Hr)}
  +
  \norm{F_{{\rm u},1}-F_{{\rm u},2}}_{L^1(0,\mathfrak T;\Hr)}.
\end{align}
We call $\mathfrak R_1$ and $\mathfrak R_2$ \emph{resolved-equivalent on $[0,\mathfrak T]$} if
\begin{align*}
  \mathfrak d_{\mathfrak T}(\mathfrak R_1,\mathfrak R_2)=0.
\end{align*}
Thus, $\mathfrak d_{\mathfrak T}$ is a pseudometric on realisations and becomes a metric after quotienting by resolved equivalence.
\end{definition}

\begin{remark}[A reduction acts on influence classes]\label{rem:influence-classes}
Different hidden systems may have the same action on the resolved variables. In that case, distinguishing them is unnecessary for the resolved evolution on the chosen time interval. \Cref{def:resolved-influence} makes this explicit: the object seen by the resolved equation is not the hidden realisation itself but its induced pair $(K,F_{\rm u})$. This is consistent with the classical realisation viewpoint in linear systems, where distinct state-space representations can describe the same input--output map; see \cite{Staffans2005}. Here the unresolved initial state and hidden forcing are included because they also contribute to the resolved trajectory.
\end{remark}

\begin{theorem}[Realisation-level stable reduction]\label{thm:realisation-stability}
Let $A$ satisfy \cref{eq:semigroup-bound} and let $f\in L^1(0,\mathfrak T;\Hr)$. Let $x_i\in \mathcal{C}([0,\mathfrak T];\Hr)$ be the resolved trajectories associated with the two hidden realisations, with common resolved initial value $x_0$ and forcing $f$:
\begin{align*}
  \frac{\mathrm{d}}{\mathrm{d}t}x_i(t)
  =Ax_i(t)+(K_i*x_i)(t)+F_{{\rm u},i}(t)+f(t),
  \quad
  x_i(0)=x_0.
\end{align*}
We set
\begin{align*}
  \kappa_*:=\max_{i=1,2}
  \norm{K_i}_{\mathscr K_{\mathfrak T}(\Hr)}.
\end{align*}
Then,
\begin{align}\label{eq:realisation-stability}
\begin{split}
 \norm{x_1-x_2}_{\mathcal{C}([0,\mathfrak T];\Hr)}
 \le{}&
 \sigma_{\mathfrak T}\exp\!\left(\sigma_{\mathfrak T}\kappa_*\mathfrak T\right)
 \Bigl[
  \norm{F_{{\rm u},1}-F_{{\rm u},2}}_{L^1(0,\mathfrak T;\Hr)}\\
 &\quad
  +\mathfrak T\norm{K_1-K_2}_{\mathscr K_{\mathfrak T}(\Hr)}
   \norm{x_2}_{\mathcal{C}([0,\mathfrak T];\Hr)}
 \Bigr].
\end{split}
\end{align}
Consequently, suppose that a sequence of hidden realisations satisfies, for any fixed $\mathfrak T>0$,
\begin{align*}
\begin{gathered}
  \eta_{\mathfrak T}^*:=
  \max\left\{
    \sup_n\sup_{0\le t\le \mathfrak T}\norm{S_n(t)}_{\Lop(\Hu)},
    \sup_{0\le t\le \mathfrak T}\norm{S(t)}_{\Lop(\Hu)}
  \right\}<\infty,\\
  \int_0^{\mathfrak T}\norm{S_n(t)-S(t)}_{\Lop(\Hu)}\,\diff t\longrightarrow0,\\
  \norm{B_n-B}_{\Lop(\Hu,\Hr)}\to0,
  \quad \norm{C_n-C}_{\Lop(\Hr,\Hu)}\to0,\\
  \norm{y_{0,n}-y_0}_{\Hu}\to0,
  \quad \norm{g_n-g}_{L^1(0,\mathfrak T;\Hu)}\to0.
\end{gathered}
\end{align*}
Then, the corresponding resolved trajectories converge in $\mathcal{C}([0,\mathfrak T];\Hr)$.
\end{theorem}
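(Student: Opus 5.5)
The estimate \cref{eq:realisation-stability} is a direct specialisation of \cref{thm:kernel-stability}. Each resolved trajectory $x_i$ solves \cref{eq:volterra-reduced} with kernel $K_i$, initial value $x_0$, and total forcing $F_i:=F_{{\rm u},i}+f$. First I will check that these data are admissible. By the remark preceding \cref{lem:kernel-convolution}, $K_i=B_iS_i(\cdot)C_i$ lies in $\mathscr K_{\mathfrak T}(\Hr)$, since $S_i$ is strongly continuous and locally bounded. For the forcing, $F_{{\rm u},i}\in L^1(0,\mathfrak T;\Hr)$: the term $B_iS_i(\cdot)y_{0,i}$ is continuous, and $S_i*g_i$ is continuous by the standard continuity of semigroup convolutions with $L^1$ data. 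Hence \cref{thm:finite-horizon-bound} supplies unique mild solutions $x_i\in\mathcal C([0,\mathfrak T];\Hr)$. Applying \cref{thm:kernel-stability} with $z_{0,1}=z_{0,2}=x_0$ and $F_1-F_2=F_{{\rm u},1}-F_{{\rm u},2}$, the common forcing $f$ cancels and the initial-value term drops out. This gives \cref{eq:realisation-stability} verbatim, with the same constants $\sigma_{\mathfrak T}$ and $\kappa_*$.

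For the convergence statement I will take $\mathfrak R_1=\mathfrak R_n$ and $\mathfrak R_2=\mathfrak R$, the limit realisation, and feed \cref{prop:hidden-influence-bound} into \cref{eq:realisation-stability} with $\eta_{\mathfrak T}:=\max\{\eta^*_{\mathfrak T},1\}$. The right-hand side of \cref{eq:hidden-kernel-bound} contains $\norm{C_n}$, $\norm{B}$, $\norm{B_n-B}$, $\delta_S(\mathfrak T)$ and $\norm{C_n-C}$. Since $C_n\to C$ in norm, $\sup_n\norm{C_n}<\infty$, so $\norm{K_n-K}_{\mathscr K_{\mathfrak T}}\to0$. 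Similarly, \cref{eq:hidden-forcing-bound} involves $\norm{y_{0,n}}+\norm{g_n}_{L^1}$, which is bounded because these sequences converge. Together with the vanishing differences this gives $\norm{F_{{\rm u},n}-F_{\rm u}}_{L^1}\to0$.

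It remains to control the prefactor in \cref{eq:realisation-stability}, which is the only point needing care. Here $\kappa_*$ must be bounded uniformly in $n$. This holds because $\norm{K_n}_{\mathscr K_{\mathfrak T}}\le\mathfrak T\eta^*_{\mathfrak T}\sup_n\norm{B_n}\sup_n\norm{C_n}$, and both suprema are finite by norm convergence. The factor $\norm{x_2}_{\mathcal C}=\norm{x}_{\mathcal C}$ is the fixed limit trajectory, which is finite by \cref{thm:finite-horizon-bound}. Therefore the right-hand side of \cref{eq:realisation-stability} tends to zero, and $x_n\to x$ in $\mathcal C([0,\mathfrak T];\Hr)$.

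I expect no genuine obstacle. The only step needing attention is the admissibility check above: the uniform semigroup bound $\eta^*_{\mathfrak T}$ is exactly what keeps both $\kappa_*$ and the constants in \cref{prop:hidden-influence-bound} uniform in $n$.
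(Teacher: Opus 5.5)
Your proposal is correct and follows the paper's proof. You obtain \cref{eq:realisation-stability} from \cref{thm:kernel-stability} with common initial value and $F_i=F_{{\rm u},i}+f$, and you get convergence by inserting \cref{prop:hidden-influence-bound} together with a uniform bound on $\kappa_*$; the one minor difference is that you place the limit realisation in the second slot, so $\norm{x_2}_{\mathcal{C}([0,\mathfrak T];\Hr)}$ is the fixed limit trajectory, whereas the paper invokes \cref{thm:finite-horizon-bound} to bound $x_n$ uniformly, and either works.
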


\begin{proof}
Estimate \cref{eq:realisation-stability} is \cref{eq:kernel-stability} with identical resolved initial data and with $F_i=F_{{\rm u},i}+f$. The convergence statement follows from \cref{prop:hidden-influence-bound}: its right-hand sides tend to zero, while \cref{thm:finite-horizon-bound} gives a uniform bound for $x_n$ because the induced kernels and effective forcings are uniformly bounded on every fixed interval.
\end{proof}

\begin{corollary}[Lipschitz continuity on bounded influence classes]\label{cor:influence-lipschitz}
Fix $\mathfrak T>0$, the resolved generator $A$, the resolved initial value $x_0$, and $f\in L^1(0,\mathfrak T;\Hr)$. Let $\mathfrak R_1,\mathfrak R_2$ be two hidden realisations such that
\begin{align*}
  \norm{K_i}_{\mathscr K_{\mathfrak T}(\Hr)}\le\kappa,
  \quad
  \norm{F_{{\rm u},i}+f}_{L^1(0,\mathfrak T;\Hr)}\le m,
  \quad i=1,2.
\end{align*}
We define
\begin{align*}
  R_{\mathfrak T}
  :=\sigma_{\mathfrak T}\bigl(\norm{x_0}_{\Hr}+m\bigr)
       \exp\!\left(\sigma_{\mathfrak T}\kappa \mathfrak T\right).
\end{align*}
Then, the associated resolved trajectories satisfy
\begin{align}\label{eq:influence-lipschitz}
  \norm{x_1-x_2}_{\mathcal{C}([0,\mathfrak T];\Hr)}
  \le
  \sigma_{\mathfrak T}\exp\!\left(\sigma_{\mathfrak T}\kappa \mathfrak T\right)
  \max\{1,\mathfrak T R_{\mathfrak T}\}
  \mathfrak d_{\mathfrak T}(\mathfrak R_1,\mathfrak R_2).
\end{align}
Consequently, the resolved solution map descends to the quotient by resolved equivalence and is Lipschitz continuous on subsets with uniformly bounded induced kernel and forcing.
\end{corollary}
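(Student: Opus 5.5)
The plan is to apply the realisation-level estimate \cref{eq:realisation-stability} of \cref{thm:realisation-stability} and then bound its ingredients by the assumed uniform bounds. Since the two trajectories share $x_0$ and $f$, the effective forcings are $F_i=F_{{\rm u},i}+f$, and $F_1-F_2=F_{{\rm u},1}-F_{{\rm u},2}$. The hypothesis $\norm{K_i}_{\mathscr K_{\mathfrak T}(\Hr)}\le\kappa$ gives $\kappa_*\le\kappa$. Because $\kappa_*$ enters the right-hand side of \cref{eq:realisation-stability} only through the increasing factor $\exp(\sigma_{\mathfrak T}\kappa_*\mathfrak T)$, we may replace $\kappa_*$ by $\kappa$ there.

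Next we need an a priori bound on $\norm{x_2}_{\mathcal{C}([0,\mathfrak T];\Hr)}$. \Cref{thm:finite-horizon-bound} applies with kernel $K_2$ and forcing $F_{{\rm u},2}+f$. Together with the assumptions $\norm{K_2}\le\kappa$ and $\norm{F_{{\rm u},2}+f}_{L^1}\le m$, it yields
\begin{align*}
  \norm{x_2}_{\mathcal{C}([0,\mathfrak T];\Hr)}
  \le\sigma_{\mathfrak T}\bigl(\norm{x_0}_{\Hr}+m\bigr)\exp(\sigma_{\mathfrak T}\kappa\mathfrak T)=R_{\mathfrak T}.
\end{align*}
Inserting this into \cref{eq:realisation-stability} bounds the bracket by
\begin{align*}
  \norm{F_{{\rm u},1}-F_{{\rm u},2}}_{L^1}+\mathfrak T R_{\mathfrak T}\norm{K_1-K_2}_{\mathscr K_{\mathfrak T}}
  \le\max\{1,\mathfrak T R_{\mathfrak T}\}\,\mathfrak d_{\mathfrak T}(\mathfrak R_1,\mathfrak R_2),
\end{align*}
using the definition \cref{eq:influence-pseudometric}. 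This is exactly \cref{eq:influence-lipschitz}.

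For the quotient statement, suppose $\mathfrak d_{\mathfrak T}(\mathfrak R_1,\mathfrak R_2)=0$. Then \cref{eq:influence-lipschitz} gives $x_1=x_2$. So the solution map is constant on resolved-equivalence classes and factors through the quotient metric space. The same inequality then reads as a Lipschitz bound, with constant $\sigma_{\mathfrak T}\exp(\sigma_{\mathfrak T}\kappa\mathfrak T)\max\{1,\mathfrak TR_{\mathfrak T}\}$, on any subset where the bounds $\kappa$ and $m$ hold uniformly.

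There is a subtlety to watch here. A resolved-equivalent realisation might violate the bounds, and the $\mathscr K_{\mathfrak T}$-norms are defined only up to a.e. identification. Both $\norm{K}$ and $\norm{F_{\rm u}+f}$ are, however, invariant under resolved equivalence. The bounded subsets are therefore unions of classes, and the descent is well defined.

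The argument is essentially bookkeeping. The only step requiring care is controlling $\norm{x_2}$ with constants independent of the particular realisation, which \cref{thm:finite-horizon-bound} supplies.
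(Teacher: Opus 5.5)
Your proposal is correct and follows essentially the same route as the paper: you bound $\norm{x_2}_{\mathcal{C}([0,\mathfrak T];\Hr)}\le R_{\mathfrak T}$ by \cref{thm:finite-horizon-bound}, insert this (with $\kappa_*\le\kappa$) into \cref{eq:realisation-stability}, and absorb the bracket using $a+\mathfrak T R_{\mathfrak T}b\le\max\{1,\mathfrak T R_{\mathfrak T}\}(a+b)$. The only minor difference is in the quotient statement: you read off $x_1=x_2$ for resolved-equivalent realisations from the Lipschitz inequality itself, whereas the paper observes directly that equal induced kernel and forcing give equal trajectories; both arguments are valid, and your remark that the bounded subsets are unions of equivalence classes is a harmless extra check.
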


\begin{proof}
By \cref{thm:finite-horizon-bound}, $\norm{x_i}_{\mathcal{C}([0,\mathfrak T];\Hr)}\le R_{\mathfrak T}$. Insert this bound into \cref{eq:realisation-stability} and use
\begin{align*}
  a+\mathfrak T R_{\mathfrak T}b\le\max\{1,\mathfrak T R_{\mathfrak T}\}(a+b),
  \quad a,b\ge0.
\end{align*}
The last statement follows because $\mathfrak d_{\mathfrak T}=0$ implies equality of the induced kernel and effective forcing, hence equality of the resolved trajectories.
\end{proof}

\begin{corollary}[Approximation of the resolved component of the full system]\label{cor:full-to-reduced}
Let $(x,y)$ solve the full block system \cref{eq:block-system,eq:block-initial} on $[0,\mathfrak T]$, assume $g\in L^1(0,\mathfrak T;\Hu)$, and let $\mathfrak R=(S_D,B,C,y_0,g)$ be its exact hidden realisation. Let $x_n$ be reduced trajectories generated by hidden realisations $\mathfrak R_n$, with the same resolved operator $A$, initial value $x_0$, and resolved forcing $f$. If
\begin{align*}
  \mathfrak d_{\mathfrak T}(\mathfrak R_n,\mathfrak R)\longrightarrow0
\end{align*}
and the induced kernels $K_n$ are uniformly bounded in $\mathscr K_{\mathfrak T}(\Hr)$, then
\begin{align*}
  x_n\longrightarrow x
  \quad\text{in }\mathcal{C}([0,\mathfrak T];\Hr).
\end{align*}
In particular, no separate closure error occurs between the full resolved component and the exact memory equation; the error measured here is entirely due to approximation of the unresolved influence.
\end{corollary}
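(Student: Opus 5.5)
The plan is to reduce \cref{cor:full-to-reduced} to \cref{thm:realisation-stability} by first identifying the full resolved component $x$ with the solution of the exact memory equation generated by $\mathfrak R$. By \cref{prop:exact-elimination}, in its mild form (\cref{rem:mild-formulation}), the resolved component $x$ of the full system satisfies \cref{eq:exact-reduced} with kernel $K=BS_DC$ and effective forcing $F_{\mathrm u}$ given by \cref{eq:unresolved-forcing}. These are exactly the pair induced by $\mathfrak R=(S_D,B,C,y_0,g)$ through \cref{eq:realisation-kernel,eq:realisation-forcing}. I would check that $K\in\mathscr K_{\mathfrak T}(\Hr)$, which follows from the remark preceding \cref{lem:kernel-convolution}. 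I would also check that $F_{\mathrm u}\in L^1(0,\mathfrak T;\Hr)$. This holds because $g\in L^1$ and $S_D$ is locally bounded, so Young's inequality applies as in the proof of \cref{prop:hidden-influence-bound}. Uniqueness of mild solutions in \cref{thm:finite-horizon-bound} then shows that $x$ is \emph{the} solution of the exact reduced problem. This step is the content of the remark that there is no separate closure error.

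Next I apply \cref{eq:realisation-stability} with $x_1=x_n$ and $x_2=x$, both having initial value $x_0$ and forcing $f$. Write $\kappa:=\max\{\sup_n\norm{K_n}_{\mathscr K_{\mathfrak T}(\Hr)},\norm{K}_{\mathscr K_{\mathfrak T}(\Hr)}\}<\infty$, which is finite by hypothesis. This gives
\begin{align*}
  \norm{x_n-x}_{\mathcal{C}([0,\mathfrak T];\Hr)}
  \le \sigma_{\mathfrak T}\e^{\sigma_{\mathfrak T}\kappa\mathfrak T}
  \max\{1,\mathfrak T\norm{x}_{\mathcal{C}([0,\mathfrak T];\Hr)}\}\,
  \mathfrak d_{\mathfrak T}(\mathfrak R_n,\mathfrak R).
\end{align*}
Here $\norm{x}_{\mathcal C}$ is a fixed finite number by \cref{thm:finite-horizon-bound}, and $\mathfrak d_{\mathfrak T}(\mathfrak R_n,\mathfrak R)\to0$, so $x_n\to x$ uniformly.

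It is convenient that the roles are ordered this way. Placing the exact solution $x$ in the slot $z_2$ of \cref{thm:kernel-stability} means no uniform bound on $x_n$ is needed at all, only on the kernels $K_n$, which enter through $\kappa_*$. The main (mild) obstacle is the first step. The full system is assumed only to have a solution, so I must make sure the identification with the exact Volterra equation holds in the mild sense under the $L^1$ regularity of $g$ and $f$.

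I would handle this by writing the mild formulation of the block system component-wise. The $y$-component gives \cref{eq:reconstruct-y}. Substituting it into the mild $x$-equation and applying Fubini to the double integral $\int_0^t S_A(t-s)B\int_0^s S_D(s-r)Cx(r)\,\diff r\,\diff s$ then reproduces \cref{eq:mild-volterra}.
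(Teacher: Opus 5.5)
Your proposal is correct and follows the paper's route. The paper likewise identifies $x$ with the solution of the exact memory equation induced by $\mathfrak R$ via \cref{prop:exact-elimination}, and then applies the stability estimate of \cref{thm:kernel-stability} (of which \cref{thm:realisation-stability} is the specialisation) together with the pseudometric $\mathfrak d_{\mathfrak T}$. Your additional checks make explicit what the paper's two-line proof leaves implicit: the mild identification, uniqueness, and placing the exact solution $x$ in the $z_2$ slot so that only the bound on $K_n$ is needed.
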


\begin{proof}
By \cref{prop:exact-elimination}, the full resolved component $x$ is exactly the solution of the Volterra equation induced by $\mathfrak R$. The assertion is therefore a direct consequence of \cref{thm:kernel-stability} and \cref{eq:influence-pseudometric}.
\end{proof}

\begin{remark}[What is gained by returning to the realisation]\label{rem:return-realisation}
Kernel stability alone says that a good approximation of the effective memory gives a good resolved trajectory. \Cref{thm:realisation-stability} identifies one level earlier where that approximation can be controlled. Errors in the hidden propagator, coupling, hidden initial condition, and hidden forcing remain distinct until the final stability estimate. This is useful when the unresolved model itself is approximated before any explicit memory kernel is formed.
\end{remark}

\begin{corollary}[Bounded hidden generators]\label{cor:bounded-generators}
Let $D_n,D\in\Lop(\Hu)$, and suppose
\begin{align*}
  \norm{D_n-D}_{\Lop(\Hu)}\longrightarrow0.
\end{align*}
If
\begin{align*}
  d_*:=\max\!\left\{\sup_n\norm{D_n}_{\Lop(\Hu)},\norm{D}_{\Lop(\Hu)}\right\}<\infty,
\end{align*}
then, with $S_n(t)=\e^{tD_n}$ and $S(t)=\e^{tD}$,
\begin{align}\label{eq:bounded-generator-semigroup}
  \norm{S_n(t)-S(t)}_{\Lop(\Hu)}
  \le t\e^{d_*t}\norm{D_n-D}_{\Lop(\Hu)}
  \quad (t\ge0).
\end{align}
In particular,
\begin{align*}
  \int_0^{\mathfrak T}\norm{S_n(t)-S(t)}_{\Lop(\Hu)}\,\diff t
  \le \frac{\mathfrak T^2}{2}\e^{d_*\mathfrak T}\norm{D_n-D}_{\Lop(\Hu)},
\end{align*}
and the convergence assumptions on the hidden propagator in \cref{thm:realisation-stability} follow from operator-norm convergence of the bounded hidden generators.
\end{corollary}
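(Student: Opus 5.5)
The plan is to use a Duhamel-type identity for the difference of two matrix exponentials generated by bounded operators. Since $D_n,D\in\Lop(\Hu)$, both $S_n(t)=\e^{tD_n}$ and $S(t)=\e^{tD}$ are norm-continuous and differentiable in operator norm. For fixed $t\ge0$ we consider $\Phi(s):=\e^{(t-s)D}\e^{sD_n}$ for $s\in[0,t]$. Its derivative in $\Lop(\Hu)$ is
\begin{align*}
  \Phi'(s)=\e^{(t-s)D}(D_n-D)\e^{sD_n}.
\end{align*}
Integrating from $0$ to $t$ gives the variation-of-constants identity
\begin{align*}
  S_n(t)-S(t)=\int_0^t \e^{(t-s)D}(D_n-D)\e^{sD_n}\,\diff s.
\end{align*}

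Next we bound the factors. The power series gives $\norm{\e^{\tau D}}_{\Lop(\Hu)}\le \e^{\tau\norm{D}}\le\e^{d_*\tau}$, and similarly $\norm{\e^{\tau D_n}}_{\Lop(\Hu)}\le\e^{d_*\tau}$ for $\tau\ge0$. The integrand is therefore bounded by $\e^{d_*(t-s)}\e^{d_*s}\norm{D_n-D}=\e^{d_*t}\norm{D_n-D}$, which is independent of $s$. Integrating over $[0,t]$ yields \cref{eq:bounded-generator-semigroup}.

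For the integrated bound we integrate $t\e^{d_*t}\le t\e^{d_*\mathfrak T}$ over $[0,\mathfrak T]$, which gives $\tfrac{\mathfrak T^2}{2}\e^{d_*\mathfrak T}\norm{D_n-D}$. The hypotheses on the hidden propagator in \cref{thm:realisation-stability} then follow. The uniform bound holds with $\eta^*_{\mathfrak T}\le\e^{d_*\mathfrak T}$, and the $L^1$ discrepancy tends to zero because $\norm{D_n-D}\to0$. Measurability of $t\mapsto\norm{S_n(t)-S(t)}$ is immediate from norm continuity.

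No step is a genuine obstacle. The only point needing care is justifying the differentiation of $\Phi$ in operator norm, which holds because the exponentials of bounded operators are norm-differentiable and the product rule applies in the Banach algebra $\Lop(\Hu)$. One must also keep the estimate at $\e^{d_*t}$, using $\norm{D}\le d_*$ and $\norm{D_n}\le d_*$, rather than at some larger constant.
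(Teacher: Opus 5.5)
Your proposal is correct and follows essentially the same route as the paper: a Duhamel identity for $\e^{tD_n}-\e^{tD}$, with the factors bounded by $\e^{d_*(t-s)}$ and $\e^{d_*s}$, and then integration using $t\e^{d_*t}\le t\e^{d_*\mathfrak T}$. The only difference is that you place $\e^{(t-s)D}$ on the left and $\e^{sD_n}$ on the right, whereas the paper does the reverse; this changes nothing. You also justify the differentiation, the bound $\eta^*_{\mathfrak T}\le\e^{d_*\mathfrak T}$ and measurability explicitly, which the paper leaves implicit.
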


\begin{proof}
Duhamel's identity gives
\begin{align*}
  \e^{tD_n}-\e^{tD}
  =\int_0^t
    \e^{(t-s)D_n}(D_n-D)\e^{sD}\,\diff s.
\end{align*}
Using $\norm{\e^{tD_n}}_{\Lop(\Hu)},\norm{\e^{tD}}_{\Lop(\Hu)}\le\e^{d_*t}$ proves \cref{eq:bounded-generator-semigroup}. Integration and the estimate $t\e^{d_*t}\le t\e^{d_*\mathfrak T}$ yield the stated $L^1$-bound. This is the elementary bounded-generator form of the standard semigroup perturbation principle; compare \cite{Pazy1983}.
\end{proof}

\begin{remark}[Propagator topology]\label{rem:strong-vs-norm}
Strong convergence $S_n(t)u\to S(t)u$ for each fixed hidden state is weaker than operator-norm control. This matters for Galerkin approximations of infinite-dimensional generators. The theorem above is only a sufficient condition. It is not a general Trotter--Kato theorem. Compact couplings or a trajectory-dependent topology may weaken the requirement. \Cref{prop:finite-passive-truncation} gives one example.
\end{remark}

\subsection{Structure-preserving stable reduction in the passive class}
Trajectory convergence does not by itself preserve a storage or dissipation law. In model reduction this distinction is familiar: when passivity, Hamiltonian structure, or another physical constraint matters, preservation of that structure must be built into the reduction rather than inferred from a small trajectory error. Structure-preserving reduction of passive state-space systems has been developed in several settings; for port-Hamiltonian systems, see, for example, \cite{GugercinPolyugaBeattieVanderSchaft2012}. The present setting is different in that the hidden state is eliminated into memory, but the same separation between approximation quality and structural compatibility is essential. Here the adjective \emph{passive} refers to the hidden feedback realisation and its memory storage--dissipation identity. Unless additional dissipativity assumptions are imposed on the resolved generator $A$, we do not claim passivity of the entire resolved evolution.

\begin{definition}[Structure-preserving stable reduction]\label{def:structure-preserving}
Let $L\in\Lop(\Hu)$ be self-adjoint and non-negative and let $C\in\Lop(\Hr,\Hu)$. Consider a family $(L_n,C_n)$, with $L_n\in\Lop(\Hu)$ and $C_n\in\Lop(\Hr,\Hu)$, and define the corresponding kernels
\begin{align*}
  K_n(t):=C_n^*\e^{-tL_n}C_n.
\end{align*}
For a fixed resolved generator $A$ satisfying \cref{eq:semigroup-bound}, forcing $F\in L^1(0,\mathfrak T;\Hr)$, and initial state $x_0\in\Hr$, let $x_n$ and $x$ denote the corresponding passive reduced trajectories
\begin{align*}
  \frac{\mathrm{d}}{\mathrm{d}t}x_n(t)
  &=Ax_n(t)-(K_n*x_n)(t)+F(t),\\
  \frac{\mathrm{d}}{\mathrm{d}t}x(t)
  &=Ax(t)-(K*x)(t)+F(t),\\
  x_n(0)&=x(0)=x_0.
\end{align*}
and set
\begin{align*}
 q_n(t)&:=\int_0^t\e^{-(t-s)L_n}C_nx_n(s)\,\diff s,\\
 q(t)&:=\int_0^t\e^{-(t-s)L}Cx(s)\,\diff s.
\end{align*}
We call the family a \emph{structure-preserving stable reduction on $[0,\mathfrak T]$} of the passive realisation $(L,C)$ if:
\begin{enumerate}[label=\textup{(P\arabic*)},leftmargin=3.2em]
  \item each $L_n$ is self-adjoint and non-negative, and the feedback is represented by the adjoint pair $C_n^*,C_n$;
  \item $x_n\to x$ in $\mathcal{C}([0,\mathfrak T];\Hr)$ and $q_n\to q$ in $\mathcal{C}([0,\mathfrak T];\Hu)$;
  \item the memory storage--dissipation functional converges:
  \begin{align}\label{eq:structure-functional-convergence}
  \begin{split}
    &\frac12\norm{q_n(\mathfrak T)}_{\Hu}^2
     +\int_0^{\mathfrak T}\norm{L_n^{1/2}q_n(t)}_{\Hu}^2\,\diff t
    \longrightarrow
     \frac12\norm{q(\mathfrak T)}_{\Hu}^2
     +\int_0^{\mathfrak T}\norm{L^{1/2}q(t)}_{\Hu}^2\,\diff t.
  \end{split}
  \end{align}
\end{enumerate}
Thus, level-I trajectory stability is supplemented by convergence of the hidden storage and dissipation that survive in the memory representation.
\end{definition}

\begin{theorem}[Norm-convergent passive realisations preserve structure]\label{thm:passive-stable-reduction}
Assume
\begin{align*}
  L_n=L_n^*\ge0,
  \quad
  L=L^*\ge0,
  \quad
  \norm{L_n-L}_{\Lop(\Hu)}\to0,
\end{align*}
and
\begin{align*}
  \norm{C_n-C}_{\Lop(\Hr,\Hu)}\to0.
\end{align*}
Let $A$ satisfy \cref{eq:semigroup-bound}, fix $F\in L^1(0,\mathfrak T;\Hr)$ and $x_0\in\Hr$, and let $x_n,x$ solve
\begin{align*}
  \frac{\mathrm{d}}{\mathrm{d}t}x_n(t)
  &=Ax_n(t)-(K_n*x_n)(t)+F(t),\\
  \frac{\mathrm{d}}{\mathrm{d}t}x(t)
  &=Ax(t)-(K*x)(t)+F(t),\\
  x_n(0)&=x(0)=x_0.
\end{align*}
where
\begin{align*}
  K_n(t)=C_n^*\e^{-tL_n}C_n,
  \quad
  K(t)=C^*\e^{-tL}C.
\end{align*}
Then, for every fixed $\mathfrak T>0$, $(L_n,C_n)$ is a structure-preserving stable reduction in the sense of \cref{def:structure-preserving}. In particular,
\begin{align*}
  x_n\to x\quad\hbox{in }\mathcal{C}([0,\mathfrak T];\Hr),
  \quad
  q_n\to q\quad\hbox{in }\mathcal{C}([0,\mathfrak T];\Hu),
\end{align*}
and \cref{eq:structure-functional-convergence} holds.
\end{theorem}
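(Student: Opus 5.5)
We verify the three conditions of \cref{def:structure-preserving} in order, reducing each to results already established.

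\emph{(P1).} This holds by hypothesis: each $L_n$ is self-adjoint and non-negative, and $K_n=C_n^*\e^{-tL_n}C_n$ is built from the adjoint pair.

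\emph{Trajectory convergence.} We apply \cref{thm:kernel-stability} with kernels $-K_n$, $-K$ and common data $F$, $x_0$. This requires $\norm{K_n-K}_{\mathscr K_{\mathfrak T}(\Hr)}\to0$ together with a uniform bound on $\norm{K_n}_{\mathscr K_{\mathfrak T}(\Hr)}$. Since $L_n\ge0$ is self-adjoint, $\norm{\e^{-tL_n}}\le1$, so we may take $\eta_{\mathfrak T}=1$ in \cref{prop:hidden-influence-bound}, with $B_n=C_n^*$ and $\norm{C_n^*-C^*}=\norm{C_n-C}$. Corollary \ref{cor:bounded-generators}, applied with $D_n=-L_n$ and $d_*=\sup_n\norm{L_n}<\infty$, gives
\[
\delta_S(\mathfrak T)\le\tfrac{\mathfrak T^2}{2}\e^{d_*\mathfrak T}\norm{L_n-L}\to0 .
\]
Hence $K_n\to K$ in $\mathscr K_{\mathfrak T}(\Hr)$ with $\norm{K_n}_{\mathscr K_{\mathfrak T}(\Hr)}\le\mathfrak T\sup_n\norm{C_n}^2$. \Cref{thm:finite-horizon-bound} then bounds $\sup_n\norm{x_n}_{\mathcal C([0,\mathfrak T];\Hr)}$, and \cref{thm:kernel-stability} yields $x_n\to x$ in $\mathcal C([0,\mathfrak T];\Hr)$.

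\emph{Convergence of $q_n$.} We split the difference as
\[
q_n-q=\int_0^t(\e^{-(t-s)L_n}-\e^{-(t-s)L})C_nx_n\,\diff s
+\int_0^t\e^{-(t-s)L}(C_n-C)x_n\,\diff s
+\int_0^t\e^{-(t-s)L}C(x_n-x)\,\diff s .
\]
Each term is bounded uniformly in $t\in[0,\mathfrak T]$. The first is at most $\delta_S(\mathfrak T)\sup_n\norm{C_n}\sup_n\norm{x_n}_{\mathcal C}$, after substituting $r=t-s$ and enlarging to $(0,\mathfrak T)$. The second is at most $\mathfrak T\norm{C_n-C}\sup_n\norm{x_n}_{\mathcal C}$. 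The third is at most $\mathfrak T\norm{C}\norm{x_n-x}_{\mathcal C}$. All three tend to zero, so $q_n\to q$ in $\mathcal C([0,\mathfrak T];\Hu)$, which completes (P2).

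\emph{(P3).} The term $\frac12\norm{q_n(\mathfrak T)}^2$ converges by uniform convergence of $q_n$. For the dissipation term we write $\norm{L_n^{1/2}q_n}^2=\inner{L_nq_n}{q_n}$, which avoids square roots entirely, and estimate
\[
\bigl|\inner{L_nq_n}{q_n}-\inner{Lq}{q}\bigr|
\le\norm{L_n-L}\norm{q_n}^2+\norm{L}\bigl(\norm{q_n}+\norm{q}\bigr)\norm{q_n-q}.
\]
This tends to zero uniformly on $[0,\mathfrak T]$, so the time integrals converge. Alternatively, (P3) follows from the identity \cref{eq:positive-type-identity}, which equals $\int_0^{\mathfrak T}\inner{C_n^*q_n}{x_n}\,\diff t$ and converges directly.

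The main obstacle is bookkeeping the uniform bounds, namely $\sup_n\norm{C_n}$, $\sup_n\norm{L_n}$ and $\sup_n\norm{x_n}_{\mathcal C}$. These follow from norm convergence and \cref{thm:finite-horizon-bound}, and the contraction property $\norm{\e^{-tL_n}}\le1$ is what makes $\eta_{\mathfrak T}=1$ available.
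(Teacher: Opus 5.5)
Your proof is correct. The main difference from the paper is in how you get (P3).

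The paper does not estimate the dissipation integrand directly. It uses the identity of \cref{thm:passive-memory} to rewrite the storage--dissipation functional as the resolved-side quadratic form $Q_n=\int_0^{\mathfrak T}\inner{(K_n*x_n)(t)}{x_n(t)}\,\diff t$. It then proves $Q_n\to Q$ from $K_n\to K$ in $\mathscr K_{\mathfrak T}(\Hr)$ and $x_n\to x$ in $\mathcal{C}([0,\mathfrak T];\Hr)$, and recovers the dissipation term using $q_n(\mathfrak T)\to q(\mathfrak T)$.

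You instead write $\norm{L_n^{1/2}q_n}_{\Hu}^2=\inner{L_nq_n}{q_n}$ and bound the integrand pointwise, using $\norm{L_n-L}_{\Lop(\Hu)}\to0$ and uniform convergence of $q_n$. Both routes avoid any convergence theorem for square roots. Yours is shorter and gives uniform-in-time convergence of the dissipation density, but it uses the operator-norm convergence of $L_n$ a second time. The paper's route derives (P3) only from quantities already controlled at Level I (kernels and trajectories) plus $q_n(\mathfrak T)\to q(\mathfrak T)$. This makes explicit that the retained storage is a resolved-side quantity. Your alternative via $\int_0^{\mathfrak T}\inner{C_n^*q_n}{x_n}\,\diff t$ is essentially the paper's idea, and is slightly more economical than the paper's three-term kernel decomposition.

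There are two smaller differences.
\begin{itemize}
\item You obtain kernel convergence by reusing \cref{prop:hidden-influence-bound} and \cref{cor:bounded-generators}, which carries a harmless factor $\e^{d_*\mathfrak T}$. The paper applies Duhamel's formula with both semigroups contractive and gets the sharper bound $\norm{\e^{-tL_n}-\e^{-tL}}_{\Lop(\Hu)}\le t\norm{L_n-L}_{\Lop(\Hu)}$.
\item Your splitting of $q_n-q$ pairs the propagator error with $C_nx_n$, so it needs $\sup_n\norm{x_n}_{\mathcal{C}([0,\mathfrak T];\Hr)}$ from \cref{thm:finite-horizon-bound}. The paper pairs it with the fixed $Cx$, so it needs only $\norm{x}_{\mathcal{C}([0,\mathfrak T];\Hr)}$ and $\norm{C_n}_{\Lop(\Hr,\Hu)}$.
\end{itemize}
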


\begin{proof}
Because $L_n,L\ge0$, the semigroups $\e^{-tL_n}$ and $\e^{-tL}$ are contractions. Duhamel's identity therefore gives
\begin{align}\label{eq:passive-semigroup-difference}
  \norm{\e^{-tL_n}-\e^{-tL}}_{\Lop(\Hu)}
  \le t\norm{L_n-L}_{\Lop(\Hu)}.
\end{align}
Consequently,
\begin{align}\label{eq:passive-kernel-convergence}
\begin{split}
 \norm{K_n-K}_{\mathscr K_{\mathfrak T}(\Hr)}
 \le{}&\mathfrak T\norm{C_n-C}_{\Lop(\Hr,\Hu)}
       \bigl(\norm{C_n}_{\Lop(\Hr,\Hu)}+\norm{C}_{\Lop(\Hr,\Hu)}\bigr)\\
 &+\frac{\mathfrak T^2}{2}\norm{C}_{\Lop(\Hr,\Hu)}\norm{C_n}_{\Lop(\Hr,\Hu)}
       \norm{L_n-L}_{\Lop(\Hu)},
\end{split}
\end{align}
which tends to zero. Applying \cref{thm:kernel-stability} to the Volterra kernels $-K_n$ and $-K$ gives $x_n\to x$ in $\mathcal{C}([0,\mathfrak T];\Hr)$.

For the memory states, we write
\begin{align*}
\begin{split}
 q_n(t)-q(t)
 ={}&\int_0^t\e^{-(t-s)L_n}C_n\{x_n(s)-x(s)\}\,\diff s\\
 &+\int_0^t\e^{-(t-s)L_n}(C_n-C)x(s)\,\diff s\\
 &+\int_0^t
   \bigl(\e^{-(t-s)L_n}-\e^{-(t-s)L}\bigr)Cx(s)\,\diff s.
\end{split}
\end{align*}
Using contraction and \cref{eq:passive-semigroup-difference},
\begin{align}\label{eq:q-convergence-bound}
\begin{split}
 \norm{q_n-q}_{\mathcal{C}([0,\mathfrak T];\Hu)}
 \le{}&\mathfrak T\norm{C_n}_{\Lop(\Hr,\Hu)}\norm{x_n-x}_{\mathcal{C}([0,\mathfrak T];\Hr)}\\
 &+\mathfrak T\norm{C_n-C}_{\Lop(\Hr,\Hu)}\norm{x}_{\mathcal{C}([0,\mathfrak T];\Hr)}\\
 &+\frac{\mathfrak T^2}{2}\norm{L_n-L}_{\Lop(\Hu)}\norm{C}_{\Lop(\Hr,\Hu)}
       \norm{x}_{\mathcal{C}([0,\mathfrak T];\Hr)},
\end{split}
\end{align}
so $q_n\to q$ uniformly.

It remains to prove convergence of the storage--dissipation functional without assuming any separate convergence theorem for the square roots $L_n^{1/2}$. By \cref{thm:passive-memory},
\begin{align}\label{eq:memory-quadratic-functional}
\begin{split}
 Q_n:={}&\int_0^{\mathfrak T}\inner{(K_n*x_n)(t)}{x_n(t)}\,\diff t\\
 ={}&\frac12\norm{q_n(\mathfrak T)}_{\Hu}^2
     +\int_0^{\mathfrak T}\norm{L_n^{1/2}q_n(t)}_{\Hu}^2\,\diff t,
\end{split}
\end{align}
and the analogous identity holds for $Q$. To make the last limit explicit, decompose
\begin{align*}
\begin{split}
  Q_n-Q
  ={}&\int_0^{\mathfrak T}
       \inner{((K_n-K)*x_n)(t)}{x_n(t)}\,\diff t\\
     &+\int_0^{\mathfrak T}
       \inner{(K*(x_n-x))(t)}{x_n(t)}\,\diff t\\
     &+\int_0^{\mathfrak T}
       \inner{(K*x)(t)}{x_n(t)-x(t)}\,\diff t.
\end{split}
\end{align*}
By \cref{eq:kernel-convolution-bound},
\begin{align*}
\begin{split}
  |Q_n-Q|
  \le{}&\mathfrak T\norm{K_n-K}_{\mathscr K_{\mathfrak T}(\Hr)}
          \norm{x_n}_{\mathcal{C}([0,\mathfrak T];\Hr)}^2\\
  &+\mathfrak T\norm{K}_{\mathscr K_{\mathfrak T}(\Hr)}
    \bigl(
      \norm{x_n}_{\mathcal{C}([0,\mathfrak T];\Hr)}
      +\norm{x}_{\mathcal{C}([0,\mathfrak T];\Hr)}
    \bigr)
    \norm{x_n-x}_{\mathcal{C}([0,\mathfrak T];\Hr)}.
\end{split}
\end{align*}
The right-hand side tends to zero because $x_n\to x$ uniformly and $K_n\to K$ in $\mathscr K_{\mathfrak T}(\Hr)$. Hence $Q_n\to Q$. Together with $q_n(\mathfrak T)\to q(\mathfrak T)$, this also yields convergence of the dissipation term separately, and in particular proves \cref{eq:structure-functional-convergence}.
\end{proof}

\begin{remark}[Uniform structural identities, not only limiting positivity]\label{rem:uniform-structure}
Every approximating kernel $K_n$ in \cref{thm:passive-stable-reduction} is itself of positive type and satisfies the exact storage identity before the limit is taken. The theorem therefore does more than show that a sequence of arbitrary kernels happens to converge to a positive one. The approximations remain inside the passive class throughout the reduction. This is the sense in which the reduction is structure-preserving here.
\end{remark}

\subsection{Finite hidden-state truncation}

The preceding theorem gives a direct route to finite internal-variable models when the passive hidden dynamics admits compatible finite-dimensional spectral subspaces.

\begin{proposition}[Finite-mode passive truncation]\label{prop:finite-passive-truncation}
Let $L=L^*\ge0$ be bounded on $\Hu$, and suppose that there is an increasing sequence of finite-rank orthogonal projections $P_N$ such that
\begin{align*}
  P_NL=LP_N,
  \quad
  P_N\to\Id\quad\hbox{strongly in }\Hu.
\end{align*}
Let $C\in\Lop(\Hr,\Hu)$ be compact and define
\begin{align*}
  C_N:=P_NC,
  \quad
  K_N(t):=C_N^*\e^{-tL}C_N.
\end{align*}
Then,
\begin{align*}
  \delta_N:=\norm{(\Id-P_N)C}_{\Lop(\Hr,\Hu)}\longrightarrow0
\end{align*}
and
\begin{align}\label{eq:finite-truncation-kernel}
  \norm{K_N-K}_{\mathscr K_{\mathfrak T}(\Hr)}
  \le 2\mathfrak T\norm{C}_{\Lop(\Hr,\Hu)}\delta_N.
\end{align}
Therefore, for any fixed resolved generator $A$ satisfying \cref{eq:semigroup-bound}, any $F\in L^1(0,\mathfrak T;\Hr)$, and every $x_0\in\Hr$, the associated passive reduced trajectories converge on $[0,\mathfrak T]$, and the family is structure-preserving in the sense of \cref{def:structure-preserving}.

Furthermore, because $P_N\Hu$ is finite-dimensional and invariant under $L$, if
\begin{align*}
  L|_{P_N\Hu}
  =\sum_{j=1}^{m_N}\lambda_{j,N}Q_{j,N}
\end{align*}
is its spectral decomposition, then
\begin{align}\label{eq:positive-prony-truncation}
  K_N(t)
  =\sum_{j=1}^{m_N}
    \e^{-\lambda_{j,N}t}
    C_N^*Q_{j,N}C_N,
\end{align}
where every weight $C_N^*Q_{j,N}C_N$ is self-adjoint and non-negative.
\end{proposition}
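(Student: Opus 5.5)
The proof has three parts: a compactness argument for $\delta_N\to0$, a direct norm estimate for the kernel error followed by an appeal to \cref{thm:passive-stable-reduction}, and a finite-dimensional spectral computation for the Prony form.

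\emph{Step 1: $\delta_N\to0$.} Since $C$ is compact, the closure $\mathcal{C}$ of the image $C(\text{unit ball of }\Hr)$ is compact in $\Hu$. The operators $\Id-P_N$ are uniformly bounded by $1$ and converge strongly to zero. Strong convergence of a uniformly bounded family is uniform on compact sets. To see this, cover $\mathcal{C}$ by finitely many $\varepsilon$-balls and use $\norm{(\Id-P_N)u}\le\norm{(\Id-P_N)u_k}+\varepsilon$ for the nearest centre $u_k$. Hence
\begin{align*}
  \delta_N=\sup_{u\in\mathcal{C}}\norm{(\Id-P_N)u}\to0.
\end{align*}
This is the only place where compactness enters. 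It is the main conceptual point, because $P_N\to\Id$ only strongly, so without compactness $\delta_N$ need not tend to zero.

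\emph{Step 2: the kernel estimate.} Because $P_N$ commutes with $L$, it commutes with $\e^{-tL}$. With $P_N$ self-adjoint and idempotent this gives $K_N(t)=C^*P_N\e^{-tL}C$. Therefore
\begin{align*}
  K(t)-K_N(t)=C^*(\Id-P_N)\e^{-tL}C=\bigl((\Id-P_N)C\bigr)^*\e^{-tL}(\Id-P_N)C,
\end{align*}
where the second form uses commutation again together with $(\Id-P_N)^2=\Id-P_N$. Since $\e^{-tL}$ is a contraction, the last expression is bounded in norm by $\delta_N^2$. We have $\delta_N\le\norm{C}$, so this is at most $\norm{C}\delta_N$. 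Integrating over $(0,\mathfrak T)$ gives a bound $\mathfrak T\norm{C}\delta_N$, which implies \cref{eq:finite-truncation-kernel}. The factor $2$ is what one obtains from the cruder splitting $C_N^*\e^{-tL}C_N-C^*\e^{-tL}C=(C_N-C)^*\e^{-tL}C_N+C^*\e^{-tL}(C_N-C)$.

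\emph{Step 3: trajectories and structure.} Take $L_N:=L$, which trivially converges in norm and is self-adjoint and non-negative. Also $\norm{C_N-C}=\delta_N\to0$. Then \cref{thm:passive-stable-reduction} applies verbatim and yields (P1)--(P3), so the family is structure-preserving and the trajectories converge.

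\emph{Step 4: the Prony form.} Write $\e^{-tL}P_N=P_N\e^{-tL}P_N$ and restrict to the finite-dimensional invariant subspace $P_N\Hu$. There $L$ is self-adjoint, so $L|_{P_N\Hu}=\sum_j\lambda_{j,N}Q_{j,N}$ with orthogonal spectral projections $Q_{j,N}$ and real eigenvalues $\lambda_{j,N}\ge0$. Hence $\e^{-tL}|_{P_N\Hu}=\sum_j\e^{-\lambda_{j,N}t}Q_{j,N}$. Since $C_N$ maps into $P_N\Hu$, substituting gives \cref{eq:positive-prony-truncation}. Each weight satisfies $(C_N^*Q_{j,N}C_Nv,v)=\norm{Q_{j,N}C_Nv}^2\ge0$ and is self-adjoint, because $Q_{j,N}=Q_{j,N}^*=Q_{j,N}^2$.
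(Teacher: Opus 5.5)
Your proof is correct. Steps 1, 3 and 4 are the paper's argument: uniform convergence of $P_N$ on the compact closure of the image of the unit ball under $C$, then \cref{thm:passive-stable-reduction} with $L_N=L$ and $C_N=P_NC$, then the finite-dimensional spectral theorem on the invariant subspace $P_N\Hu$.

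The genuine difference is in Step 2. The paper uses exactly the two-term splitting you mention at the end,
$K(t)-K_N(t)=(C-C_N)^*\e^{-tL}C+C_N^*\e^{-tL}(C-C_N)$.
That splitting needs only contractivity of $\e^{-tL}$ and works for any couplings with $\norm{C_N-C}_{\Lop(\Hr,\Hu)}\to0$. It gives the linear bound with the factor $2$.

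You instead use $P_N\e^{-tL}=\e^{-tL}P_N$ together with $P_N=P_N^*=P_N^2$ to collapse the difference to $\bigl((\Id-P_N)C\bigr)^*\e^{-tL}(\Id-P_N)C$. This buys two things:
\begin{enumerate}[label=\textup{(\alph*)},leftmargin=2.2em]
  \item the sharper estimate $\norm{K_N-K}_{\mathscr K_{\mathfrak T}(\Hr)}\le\mathfrak T\delta_N^2$, which implies \cref{eq:finite-truncation-kernel};
  \item the fact that $K(t)-K_N(t)$ is self-adjoint and non-negative for every $t$, so spectral truncation only discards a non-negative part of the memory.
\end{enumerate}
Feeding the quadratic rate into \cref{thm:kernel-stability}, with the untruncated trajectory $x$ in the role of $z_2$, gives an $O(\delta_N^2)$ trajectory error. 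The memory-state error $q-q_N$ is still only $O(\delta_N)$, because it contains $(\Id-P_N)q(t)=\int_0^t\e^{-(t-s)L}(\Id-P_N)Cx(s)\,\diff s$.

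The cost of your route is that the kernel estimate now depends on the spectral compatibility $P_NL=LP_N$. The paper needs that hypothesis only for the finite-dimensionality of $q_N$ and for the Prony form \cref{eq:positive-prony-truncation}.
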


\begin{proof}
Strong convergence $P_N\to\Id$ is uniform on compact subsets of $\Hu$. Since $C$ is compact, the image of the unit ball of $\Hr$ has compact closure, and therefore $\delta_N\to0$. Since $P_N$ commutes with $L$, it also commutes with $\e^{-tL}$, and the range of $C_N$ evolves inside the finite-dimensional subspace $P_N\Hu$. Using $\norm{\e^{-tL}}_{\Lop(\Hu)}\le1$,
\begin{align*}
\begin{split}
 K(t)-K_N(t)
 ={}&(C-C_N)^*\e^{-tL}C
     +C_N^*\e^{-tL}(C-C_N),
\end{split}
\end{align*}
which gives \cref{eq:finite-truncation-kernel}. The convergence and structure-preserving conclusions follow from \cref{thm:passive-stable-reduction} with $L_N=L$ and $C_N=P_NC$. Moreover, the corresponding memory state $q_N(t)$ lies in $P_N\Hu$ for every $t$, because $P_N\Hu$ is invariant under $\e^{-tL}$; hence the internal-variable part of the realisation is genuinely finite-dimensional. Finally, the finite-dimensional spectral theorem yields \cref{eq:positive-prony-truncation}; its weights are non-negative because
\begin{align*}
  \inner{C_N^*Q_{j,N}C_Nv}{v}_{\Hr}
  =\norm{Q_{j,N}C_Nv}_{\Hu}^2\ge0.
\end{align*}
\end{proof}

\begin{remark}[Why compactness appears]\label{rem:compact-coupling}
The assumption that $C$ is compact is not a generic requirement of stable reduction. It is used here for one precise purpose: strong convergence of the finite-rank projections $P_N$ then becomes operator-norm convergence of $P_NC$ to $C$. Without such compactness, spectral truncation may converge strongly on individual hidden inputs without converging in the integrated operator norm required by the present $\mathscr K_{\mathfrak T}$-kernel theorem. A weaker topology would require a correspondingly different trajectory-stability argument.
\end{remark}

\section{Elementary examples}\label{sec:examples}

\subsection{A hidden scalar mode generates exponential memory}

We consider
\begin{subequations}\label{eq:scalar-memory-system}
\begin{align}
  \frac{\mathrm{d}}{\mathrm{d}t}x(t)&=-a x(t)+b y(t),\\
  \frac{\mathrm{d}}{\mathrm{d}t}y(t)&=c x(t)-d y(t),
\end{align}
\end{subequations}
where $a,d>0$ and $b,c\in\R$. The unresolved equation gives
\begin{align*}
  y(t)=\e^{-dt}y_0
       +c\int_0^t\e^{-d(t-s)}x(s)\,\diff s.
\end{align*}
Therefore, the exact equation for $x$ is
\begin{align}\label{eq:scalar-memory-reduced}
  \frac{\mathrm{d}}{\mathrm{d}t}x(t)
  =-a x(t)
   +bc\int_0^t\e^{-d(t-s)}x(s)\,\diff s
   +b\e^{-dt}y_0.
\end{align}
The unresolved variable has disappeared from the state vector, but its two effects remain visible: the feedback kernel
\begin{align*}
  K(t)=bc\e^{-dt}
\end{align*}
and the initial-state forcing $b\e^{-dt}y_0$. Deleting $y$ and replacing \cref{eq:scalar-memory-system} by $\frac{\mathrm{d}}{\mathrm{d}t}x(t)=-a x(t)$ therefore changes the dynamics unless these two terms are known to be negligible in a relevant topology.

The example also shows why internal-variable and memory representations are two descriptions of the same mechanism. Starting from \cref{eq:scalar-memory-reduced}, the exponential convolution can be localised again by introducing the internal variable
\begin{align*}
  q(t):=\int_0^t\e^{-d(t-s)}x(s)\,\diff s,
  \quad
  \frac{\mathrm{d}}{\mathrm{d}t}q(t)=x(t)-dq(t).
\end{align*}
Thus elimination creates memory, while a suitable internal variable can remove the explicit memory at the price of enlarging the state space. This duality is familiar in the analysis of equations with memory; compare \cite{Pruss1993}.

\subsection{Small unresolved amplitude can accumulate}

The next example is elementary but important.

\begin{example}[Small state, order-one long-time effect]\label{ex:cumulative}
For $\varepsilon>0$, we consider
\begin{align}\label{eq:cumulative-system}
  \frac{\mathrm{d}}{\mathrm{d}t}x(t)=y(t),
  \quad
  \frac{\mathrm{d}}{\mathrm{d}t}y(t)=0,
  \quad
  x(0)=0,
  \quad
  y(0)=\varepsilon.
\end{align}
Then,
\begin{align*}
  y(t)=\varepsilon,
  \quad
  x(t)=\varepsilon t.
\end{align*}
The unresolved component is uniformly small:
\begin{align*}
  \sup_{t\ge0}|y(t)|=\varepsilon.
\end{align*}
If it is simply deleted, the reduced equation is $\frac{\mathrm{d}}{\mathrm{d}t}x_{\mathrm{tr}}(t)=0$ and therefore $x_{\mathrm{tr}}(t)=0$. At the time $\mathfrak T_\varepsilon=\varepsilon^{-1}$, however,
\begin{align*}
  |x(\mathfrak T_\varepsilon)-x_{\mathrm{tr}}(\mathfrak T_\varepsilon)|=1.
\end{align*}
Thus, small unresolved amplitude does not imply uniformly small influence on time intervals whose length grows as the unresolved scale decreases.
\end{example}

\begin{remark}
\Cref{ex:cumulative} does not say that small unresolved variables can never be neglected. It says that the criterion must involve the coupling, the time horizon, and the stability structure of the evolution, not the amplitude of the unresolved state alone. This distinction becomes more important in multiscale systems, where small or rapidly varying components may produce finite drift, diffusion, memory, or effective dissipation after reduction; see, for example, \cite{GivonKupfermanStuart2004,PavliotisStuart2008}.
\end{remark}

\section{Interpretation and role of the present note}\label{sec:meaning}

The ingredients used above belong to established theories. Exact elimination and the appearance of memory are classical in projection-operator and Mori--Zwanzig approaches \cite{Mori1965,Zwanzig1961,ChorinHaldKupferman2000,ChorinHaldKupferman2002}; Volterra equations provide a classical analytic language for memory \cite{GripenbergLondenStaffans1990,Pruss1993}; and structure-preserving model reduction has a substantial theory of its own \cite{GugercinPolyugaBeattieVanderSchaft2012}. The purpose of the present note is therefore not to relabel any one of these theories.

Its role is to isolate a common reduction question in the chain
\begin{align*}
  \text{hidden dynamics}
  \longrightarrow
  (K,F_{\mathrm u})
  \longrightarrow
  \text{resolved trajectory}
  \longrightarrow
  \text{retained structure}.
\end{align*}
The first arrow is exact elimination. The second asks how accurately the induced unresolved influence must be represented in order to control the resolved trajectory. The third asks whether a structural property that survives elimination is also preserved by the approximating family.

In particular, the object being reduced need not be the hidden state itself. By \cref{def:resolved-influence}, different hidden realisations may be indistinguishable to the resolved equation when they induce the same pair $(K,F_{\mathrm u})$. The present theory concerns approximation of this resolved influence; it does not claim unique reconstruction of the hidden mechanism from resolved data.

The proved hierarchy is deliberately modest. Level I gives finite-horizon trajectory stability under integrated operator-norm perturbations of the unresolved influence. The realisation-level estimate propagates errors in the hidden propagator, couplings, hidden initial state, and hidden forcing to the resolved trajectory. Level II adds convergence of a storage--dissipation functional in a passive subclass, and the finite-mode result gives one explicit structure-preserving approximation family. In this sense the note fixes a minimal principle:
\begin{align*}
  \boxed{
  \text{remove unresolved variables}
  \quad\text{without uncontrolled loss of their dynamical influence}
  }.
\end{align*}

\section{What stable reduction does and does not mean}\label{sec:scope}

The terminology \emph{stable reduction} is useful only if its scope is kept clear.

\paragraph{Reduction is not truncation.}
A truncation removes variables and their couplings. Exact elimination may remove variables from the state vector while retaining their influence through memory and effective forcing; \cref{prop:exact-elimination} is the basic example.

\paragraph{Stable reduction is not necessarily Markovian.}
A Markovian reduced model may be appropriate after an additional approximation or asymptotic argument, but Markovianity is not a consequence of eliminating unresolved variables.

\paragraph{Stable reduction is not determined by state amplitude alone.}
The relevant quantity is the influence transferred to the resolved dynamics. It depends on the coupling, the unresolved propagator, the observation horizon, and the stability of the resolved equation; see \cref{ex:cumulative}.

\paragraph{Trajectory stability and structure preservation are different requirements.}
\Cref{def:stable-reduction} controls resolved trajectories on a fixed interval, whereas \cref{def:structure-preserving} adds a storage--dissipation requirement in the passive class. Neither statement should be inferred from the other without additional hypotheses.

Finally, exact elimination is not the same as approximate closure. If the effective forcing in \cref{prop:exact-elimination} is retained, the exact memory equation has the same resolved trajectory as the full block system. Approximation enters only when that exact influence is altered. The present note therefore does not propose a universal reduction algorithm; it provides a criterion for deciding what a proposed reduction must control.

\paragraph{Acknowledgements.}
None.

\paragraph{Funding.}
The author declares that no funds, grants, or other support were received during the preparation of this manuscript.

\paragraph{Data availability.}
No datasets were generated or analysed during the current study.

\paragraph{Competing interests.}
The author declares no competing interests.



\begin{thebibliography}{99}

\bibitem{BennerGugercinWillcox2015}
Benner, P., Gugercin, S., Willcox, K.: A survey of projection-based model reduction methods for parametric dynamical systems. SIAM Rev. \textbf{57}(4), 483--531 (2015). \url{https://doi.org/10.1137/130932715}

\bibitem{ChorinHaldKupferman2000}
Chorin, A.J., Hald, O.H., Kupferman, R.: Optimal prediction and the Mori--Zwanzig representation of irreversible processes. Proc. Natl. Acad. Sci. USA \textbf{97}(7), 2968--2973 (2000). \url{https://doi.org/10.1073/pnas.97.7.2968}

\bibitem{ChorinHaldKupferman2002}
Chorin, A.J., Hald, O.H., Kupferman, R.: Optimal prediction with memory. Physica D \textbf{166}(3--4), 239--257 (2002). \url{https://doi.org/10.1016/S0167-2789(02)00446-3}

\bibitem{ChorinKastKupferman1999}
Chorin, A.J., Kast, A.P., Kupferman, R.: Unresolved computation and optimal predictions. Commun. Pure Appl. Math. \textbf{52}(10), 1231--1254 (1999)

\bibitem{Dafermos1970}
Dafermos, C.M.: An abstract Volterra equation with applications to linear viscoelasticity. J. Differential Equations \textbf{7}(3), 554--569 (1970). \url{https://doi.org/10.1016/0022-0396(70)90101-4}

\bibitem{GivonHaldKupferman2005}
Givon, D., Hald, O.H., Kupferman, R.: Existence proof for orthogonal dynamics and the Mori--Zwanzig formalism. Israel J. Math. \textbf{145}, 221--241 (2005). \url{https://doi.org/10.1007/BF02786691}

\bibitem{GivonKupfermanStuart2004}
Givon, D., Kupferman, R., Stuart, A.: Extracting macroscopic dynamics: model problems and algorithms. Nonlinearity \textbf{17}(6), R55--R127 (2004). \url{https://doi.org/10.1088/0951-7715/17/6/R01}

\bibitem{GripenbergLondenStaffans1990}
Gripenberg, G., Londen, S.-O., Staffans, O.: \emph{Volterra Integral and Functional Equations}. Encyclopedia of Mathematics and its Applications, vol. 34. Cambridge University Press, Cambridge (1990)

\bibitem{GugercinPolyugaBeattieVanderSchaft2012}
Gugercin, S., Polyuga, R.V., Beattie, C., van der Schaft, A.: Structure-preserving tangential interpolation for model reduction of port-Hamiltonian systems. Automatica \textbf{48}(9), 1963--1974 (2012). \url{https://doi.org/10.1016/j.automatica.2012.05.052}

\bibitem{Ishizaka2026Memory}
Ishizaka, H.: Weighted well-posedness and kernel stability for coercive evolution equations with measure-valued delays. arXiv:2602.19099 (2026). \url{https://doi.org/10.48550/arXiv.2602.19099}

\bibitem{Mori1965}
Mori, H.: Transport, collective motion, and Brownian motion. Prog. Theor. Phys. \textbf{33}(3), 423--455 (1965). \url{https://doi.org/10.1143/PTP.33.423}

\bibitem{PavliotisStuart2008}
Pavliotis, G.A., Stuart, A.M.: \emph{Multiscale Methods: Averaging and Homogenization}. Texts in Applied Mathematics, vol. 53. Springer, New York (2008)

\bibitem{Pazy1983}
Pazy, A.: \emph{Semigroups of Linear Operators and Applications to Partial Differential Equations}. Applied Mathematical Sciences, vol. 44. Springer, New York (1983). \url{https://doi.org/10.1007/978-1-4612-5561-1}

\bibitem{Pruss1993}
Pr\"uss, J.: \emph{Evolutionary Integral Equations and Applications}. Monographs in Mathematics, vol. 87. Birkh\"auser, Basel (1993). \url{https://doi.org/10.1007/978-3-0348-8570-6}

\bibitem{SchillingSongVondracek2012}
Schilling, R.L., Song, R., Vondra\v{c}ek, Z.: \emph{Bernstein Functions: Theory and Applications}, 2nd edn. De Gruyter, Berlin/Boston (2012). \url{https://doi.org/10.1515/9783110269338}

\bibitem{Staffans2005}
Staffans, O.: \emph{Well-Posed Linear Systems}. Encyclopedia of Mathematics and its Applications, vol. 103. Cambridge University Press, Cambridge (2005). \url{https://doi.org/10.1017/CBO9780511543197}

\bibitem{Zwanzig1961}
Zwanzig, R.: Memory effects in irreversible thermodynamics. Phys. Rev. \textbf{124}, 983--992 (1961). \url{https://doi.org/10.1103/PhysRev.124.983}

\end{thebibliography}
\end{document}